\documentclass[a4paper,oneside,11pt, reqno]{amsart}
\usepackage{amsfonts,amssymb,amsmath,amsthm}
\usepackage{mathrsfs}
\usepackage{url}
\usepackage{enumerate}
\usepackage{xcolor}
\usepackage{hyperref}
\usepackage{todonotes}
\usepackage{ulem} 

\urlstyle{sf}

\urlstyle{sf}
\newtheorem{theorem}{Theorem}[section]
\newtheorem*{theorem*}{Theorem}
\newtheorem{lemma}[theorem]{Lemma}

\newtheorem{prop}[theorem]{Proposition}
\newtheorem{cor}[theorem]{Corollary}
\theoremstyle{definition}

\newtheorem{rem}[theorem]{Remark}



\setlength{\textheight}{680pt} \setlength{\textwidth}{450pt}
\oddsidemargin -0mm \evensidemargin -0mm \topmargin -25pt
\baselineskip=0.30in

\author[S. Ghara]{Soumitra Ghara}
\author[R. Gupta]{Rajeev Gupta}
\author[M. R. Reza]{Md. Ramiz Reza}

\address[S. Ghara]{Department of Mathematics\\
Indian Institute of Technology Kharagpur\\
Midnapore-721302}

\address[R. Gupta]{School of Mathematics and Computer Science\\
Indian Institute of Technology Goa\\
Goa -  403401}

\address[Md. R. Reza]{School of Mathematics\\
Indian Institute of Science Education and Research Thiruvananthapuram \\
Kerala - 695551}

\email[S. Ghara]{soumitra@maths.iitkgp.ac.in}
\email[R. Gupta]{rajeev@iitgoa.ac.in}
\email[M. R. Reza]{ramiz.md@gmail.com}


\keywords{Weighted Dirichlet-type integrals, Ces{\`a}ro mean, Hadamard multiplication }
\subjclass[2010]{Primary  41A10, 40G05, 46E20 Secondary 41A17}


\begin{document}
\title [Ces{\`a}ro  summability of Taylor series]{Ces{\`a}ro  summability of Taylor series in higher order weighted Dirichlet type spaces}

 \maketitle
 \begin{center}
 {\textit{Dedicated to Professor Jan Stochel on the occasion of his 70th anniversary}}
 \end{center}

\begin{abstract}
For a  positive integer $m$ and a finite non-negative Borel measure $\mu$ on the unit circle, we study the Hadamard multipliers of higher order weighted Dirichlet-type spaces $\mathcal H_{\mu, m}$. We show that if $\alpha>\frac{1}{2},$ then for any $f$ in $\mathcal H_{\mu, m},$ the sequence of generalized Ces{\`a}ro sums $\{\sigma_n^{\alpha}[f]\}$ converges to $f$. We further show that if $\alpha=\frac{1}{2}$ then for the Dirac delta measure supported at any point on  the unit circle, the previous statement breaks down for every positive integer $m$.
\end{abstract}

\section{Introduction}
The symbols $\mathbb T$ and $\mathbb D$ will denote the unit circle and the open unit disc in the complex plane $\mathbb C$ respectively. We use the symbols $\mathbb Z, \mathbb N$ and $\mathbb Z_{\geqslant 0}$ to denote the set of  integers, positive integers and non-negative integers respectively. The notation $\mathcal M_{+}(\mathbb T)$ stands for the set of all finite non-negative Borel measures on $\mathbb T$. Let $\mathcal O(\mathbb D)$ denote the space of all complex valued holomorphic functions on $\mathbb D.$ For a holomorphic function $f\in\mathcal O(\mathbb D),$ which has a power series representation of the form $f(z)= \sum_{k=0}^{\infty} a_k z^k,$  $z\in\mathbb D,$ the $n^{\it th}$ Taylor partial sum $s_n[f]$ and the $n^{\it th}$ Ces{\`a}ro sum $\sigma_n[f]$ are defined by 
\begin{align*}
s_n[f](z)& := \sum\limits_{k=0}^{n} a_k z^k,~~\mbox{and}\,\,\\
\sigma_n[f](z):=\frac{1}{n+1} \Big(\sum\limits_{k=0}^{n} s_k [f](z)\Big) &= \sum\limits_{k=0}^n\Big(1-\frac{k}{n+1}\Big)a_kz^k,\,\,\,\,~~n\in \mathbb Z_{\geqslant 0}.
\end{align*}

When exploring analytic function spaces, a common concern revolves around determining the density of the polynomial set within the given function space. Once the density of polynomials is established, the next natural question to address is how to construct polynomials that can closely approximate a given function.
In this context, it becomes natural to inquire whether, for a function $f$ residing in a normed function space, the sequence of Taylor partial sum $\{s_n[f]\}$ converge to $f$ in the norm associated with that space. 
It is worth noting that in several classical function spaces like Hardy space, Dirichlet space, and Bergman space defined over the unit disc $\mathbb D$, it is well-established that the sequence $\{s_n[f]\}$ indeed converges to $f$ as prescribed by the associated norm.
However, it is worth noting that there are instances where this convergence property does not hold. 
For example, it is well known that there exists a function $f$ in the disc algebra $A(\mathbb D)$ such that the sequence $\{s_n[f]\}$ does not converge to $f,$ see \cite[page 57]{Kat}. 
Another notable family of examples in the context of Hilbert function space is the family of weighted Dirichlet type spaces.  Richter introduced the weighted Dirichlet space $D(\mu)$ for each $\mu\in \mathcal M_{+}(\mathbb T)$ in order to study the structure of closed $M_z$-invariant subspaces of the classical Dirichlet space on $\mathbb D$ and to obtain a model for cyclic analytic $2$-isometries, see \cite{R}. 
Interestingly, when $\mu = \delta_{\lambda}$, the Dirac delta measure at a point $\lambda \in \mathbb T$, it has been established that there exists a function $f$ in $D(\delta_{\lambda})$ for which the sequence $\{s_n[f]\}$ does not converge to $f$ within the space $D(\delta_{\lambda})$, see \cite[Exercise 7.3 (2)]{Primer}. 
On the contrary, a remarkable discovery surfaced when Mashreghi and Ransford recently showed that for any arbitrary, but fixed, $\mu\in \mathcal M_+(\mathbb T)$, the sequence of Ces{\`a}ro sums $\{\sigma_n[f]\}$ converges to $f$ for every $f$ in the space $D(\mu)$ (refer to \cite[Theorem 1.6]{MasRans}). 
Furthermore, they refined their result by establishing that the sequence of generalized Ces{\`a}ro sums $\{\sigma_n^{\alpha}[f]\}$ also converge to $f$ for every $f$ in $D(\mu)$ and for each $\mu\in \mathcal M_+(\mathbb T)$ whenever $\alpha > 1/2$, see \cite[Theorem 1.1]{CesaroM}. 
For each $n\in \mathbb Z_{\geqslant 0},$ the generalized $n^{\it th}$-Ces{\`a}ro mean $\sigma_n^{\alpha}[f]$ is defined by 
\begin{align*}
\sigma_n^{\alpha}[f](z)= \binom{n+\alpha}{\alpha}^{-1} \sum\limits_{k=0}^n \binom{n-k+\alpha}{\alpha} a_k z^k,\quad \alpha \geqslant 0,
\end{align*}
where the binomial coefficient $\binom{m+\alpha}{\alpha}$ is given the following interpretation: 
\begin{align*}
\binom{m+\alpha}{\alpha} = \frac{\Gamma(m+\alpha+1)}{\Gamma(\alpha+1) \Gamma(m+1)}, 
\end{align*} 
with $\Gamma$ denoting the usual gamma function. 
Note that the generalized $n^{\it th}$-Ces{\`a}ro mean $\sigma_n^{\alpha}[f]$ corresponds to the $n^{\it th}$-partial sum $s_n[f]$ when $\alpha =0$ and corresponds to the the $n^{\it th}$-Ces{\`a}ro sum $\sigma_n[f]$ when $\alpha=1.$ 
For $\alpha\geqslant 0,$ the Taylor series of a function $f$ is said to be $(C,\alpha)$-summable to $f$ if the sequence $\{\sigma_n^{\alpha}[f]\}$ converges to $f$ in the associated normed function space. It is well known that $(C,\alpha)$-summability implies $(C,\beta)$-summability if $\alpha \leqslant \beta,$ see \cite[Theorem 43]{Hardy}. 
In this article, our primary objective is to investigate whether the sequence of generalized Cesàro means $\{\sigma_n^{\alpha}[f]\}$ converges to the corresponding function $f$ that belongs to the higher order weighted Dirichlet space $\mathcal H_{\mu,m}$ as defined below. 
These spaces serve as a crucial framework for modeling a specific sub-class within the broader category of $m$-isometries. The motivation behind introducing these spaces is rooted in Agler's exploration of $m$-isometries
see \cite{AglerStan1}, \cite{AglerStan2}, \cite{AglerStan3}. 
In order to find a suitable model for cyclic $m$-isometries, Rydhe delved into the study of higher order weighted Dirichlet type space, see \cite{Rydhe}.  Following Rydhe's framework, we consider, for a measure $\mu$ in $\mathcal M_{+}(\mathbb T)$, $f\in \mathcal O(\mathbb D)$ and for a positive integer $m,$ the concept of  {\it weighted Dirichlet-type integral of $f$ of order $m$}, $D_{\mu,m}(f),$ defined by 
\begin{align*}
D_{\mu,m}(f):= \frac{1}{m!(m-1)!}\displaystyle\int_{\mathbb D}\big|f^{(m)}(z)\big|^2 P_{\!\mu}(z)(1-|z|^2)^{m-1}dA(z).
\end{align*} 
Here $dA(z)$ denotes the normalized Lebesgue measure on the unit disc $\mathbb D,$ $f^{(m)}(z)$ represents the $m^{\it th}$-order derivative of $f$ at $z,$ and $P_{\!\mu}(z)$ is the Poisson integral of the measure $\mu,$ that is,
\begin{align*}
P_{\!\mu}(z):=\int_{\mathbb T} \frac{1-|z|^2}{|z-\zeta|^2} d\mu(\zeta),\,\,\,z\in \mathbb D.
\end{align*}  
 When dealing with Dirac delta measure $\delta_{\lambda}$, representing a point measure at $\lambda \in \mathbb T$, we adopt a simpler notation $D_{\lambda,m}(\cdot)$ in place of $D_{\delta_{\lambda},m}(\cdot)$. 
 For a measure $\mu\in \mathcal M_{+}(\mathbb T)/\{0\}$ and for each $m\in\mathbb N,$ 
 we consider the semi-inner product space $\mathcal H_{\mu, m}$ given by
$$\mathcal H_{\mu, m}:=\big\{f\in \mathcal O(\mathbb D): D_{\mu,m}(f)< \infty\big\},$$
associated to the semi-norm $\sqrt{D_{\mu,m}(\cdot)}.$ 
In case $\mu$ is $\delta_{\lambda}$ for some $\lambda\in\mathbb T,$ we will use a simpler notation $\mathcal H_{\lambda,m}$ in place of $\mathcal H_{\delta_{\lambda},m}$ and we refer it as a {\it local Dirichlet space of order $m$ at $\lambda$}. If $\mu = 0,$ we set $\mathcal H_{\mu,m}= H^2,$ Hardy space on $\mathbb D,$ for every $m\in \mathbb N.$  
If $\mu=\sigma,$ by a straightforward computation, it follows that for a holomorphic function $f=\sum_{k=0}^\infty a_k z^k$ in $\mathcal O(\mathbb D),$ we have 
\begin{equation*}
D_{\sigma, m}(f)=\sum_{k=m}^\infty \binom{k}{m}|a_k|^2, \,\,\,\quad m\in\mathbb Z_{\geqslant 0},
\end{equation*}
where $\binom{k}{m}:=\frac{k!}{m!(k-m)!}$ for any $k\geqslant m.$ Using this, it can be easily verified that $\mathcal H_{\sigma, m}$ coincides with the space $\mathcal D_{m}$, studied in \cite{TaylorDA}, where
$$\mathcal D_m=\Big\{\sum_{k=0}^\infty a_k z^k: \sum_{k=0}^\infty |a_k|^2 (k+1)^m < \infty\Big\}.$$

 The reader is referred to \cite{GGR}, \cite{GGR2}, \cite{L-R} and \cite{Rydhe} for several properties of the spaces $\mathcal H_{\mu,m}$ for an arbitrary non-negative measure $\mu$ and a positive integer $m.$ 
 Note that when $m=1,$ the weighted Dirichlet-type space $\mathcal H_{\mu,1}$ coincides with the weighted Dirichlet type space $D(\mu)$ as introduced in Richter \cite{R}. 
 Theorem \ref{mainthm1} and Theorem \ref{mainthm2} are the main results of this article. These results in the case of $m=1$ are well known, see \cite{MasRans, CesaroM}.
 \begin{theorem}\label{mainthm1}
 Let $\mu\in\mathcal M_+(\mathbb T)$ and $m\in \mathbb N.$ If $\alpha > \frac{1}{2}$ then there exists a constant $\kappa >0$ such that
 \begin{align*}
 D_{\mu,m}(\sigma_n^{\alpha}[f]) \leqslant \kappa D_{\mu,m}(f),\,\,~~~\mbox{for every~}n\in \mathbb N,\ f\in \mathcal H_{\mu,m}.
 \end{align*}
Moreover, for every $f\in \mathcal H_{\mu,m},$ $D_{\mu,m}\big(\sigma_n^{\alpha}[f]-f\big) \rightarrow 0 $ as $n\to \infty.$
 \end{theorem}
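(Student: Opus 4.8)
The plan is to pass from the measure $\mu$ to point masses, convert each local integral into a weighted sum of partial sums of Taylor coefficients, and then recognize the generalized Ces\`aro operator as a single, $\zeta$-independent triangular matrix acting on a weighted sequence space.

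\emph{Step 1 (reduction and a coefficient formula).} Since $P_{\!\mu}(z)=\int_{\mathbb T}\frac{1-|z|^2}{|z-\zeta|^2}\,d\mu(\zeta)$ and the integrand defining $D_{\mu,m}$ is non-negative, Tonelli's theorem gives $D_{\mu,m}(f)=\int_{\mathbb T}D_{\zeta,m}(f)\,d\mu(\zeta)$; thus it suffices to bound each local integral with a constant independent of $\zeta$. For this I would first record a Taylor-coefficient formula for $D_{\zeta,m}$. Writing $f^{(m)}(z)=\sum_{j\ge 0}\tfrac{(j+m)!}{j!}a_{j+m}z^j$, expanding $|z-\zeta|^{-2}=\sum_{p,q\ge 0}(\bar\zeta z)^p(\zeta\bar z)^q$ and $(1-|z|^2)^m$ by the binomial theorem, integrating in polar coordinates, and using $\sum_{i=0}^m\binom{m}{i}\tfrac{(-1)^i}{s+i+1}=\tfrac{s!\,m!}{(s+m+1)!}$, one is led to
\[
D_{\zeta,m}(f)=\frac{1}{(m-1)!}\sum_{s=0}^{\infty}\frac{s!}{(s+m+1)!}\,\big|B_s(\zeta)\big|^2,\qquad B_s(\zeta):=\sum_{j=0}^{s}\frac{(j+m)!}{j!}\,a_{j+m}\,\zeta^{\,j}.
\]
I would cross-check the normalization on monomials, where this gives $D_{\zeta,m}(z^k)=\binom{k}{m}$ for every $\zeta$, consistent with the stated identity $D_{\sigma,m}(z^k)=\binom{k}{m}$.

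\emph{Step 2 (the Ces\`aro operator as a fixed triangular matrix).} The Taylor coefficients of $\sigma_n^\alpha[f]$ are $c_{n,k}^\alpha a_k$ with $c_{n,k}^\alpha=\binom{n+\alpha}{\alpha}^{-1}\binom{n-k+\alpha}{\alpha}$ for $0\le k\le n$ and $0$ otherwise, so the partial sums associated with $\sigma_n^\alpha[f]$ are $\widetilde B_s(\zeta)=\sum_{j=0}^s c_{n,j+m}^\alpha\tfrac{(j+m)!}{j!}a_{j+m}\zeta^{\,j}$. Abel summation yields
\[
\widetilde B_s(\zeta)=c_{n,s+m}^\alpha\,B_s(\zeta)+\sum_{j=0}^{s-1}\big(c_{n,j+m}^\alpha-c_{n,j+1+m}^\alpha\big)B_j(\zeta),
\]
so that $\{B_s(\zeta)\}_s\mapsto\{\widetilde B_s(\zeta)\}_s$ is effected by a single lower-triangular matrix $A_n$ that does \emph{not} depend on $\zeta$. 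Consequently, if $\sup_n\|A_n\|_{\mathcal B(\ell^2(w))}<\infty$ for the weights $w_s=\tfrac{s!}{(s+m+1)!}\asymp s^{-(m+1)}$, then applying this bound for each fixed $\zeta$ and integrating $d\mu(\zeta)$ delivers the asserted inequality $D_{\mu,m}(\sigma_n^\alpha[f])\le\kappa\,D_{\mu,m}(f)$.

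\emph{Step 3 (the main obstacle: the uniform matrix bound).} The crux is to prove $\sup_n\|A_n\|_{\mathcal B(\ell^2(w))}<\infty$. Since $k\mapsto c_{n,k}^\alpha$ is decreasing for $\alpha>0$, all entries of $A_n$ are non-negative and the diagonal contributes at most $1$ (as $c_{n,k}^\alpha\le 1$); the genuine work lies in the subdiagonal, with entries $d_{n,j}:=c_{n,j+m}^\alpha-c_{n,j+1+m}^\alpha\ge 0$. I would run a Schur test with a power test sequence $h_s\asymp s^{\gamma}$, estimating the resulting row and column sums through the uniform asymptotics $c_{n,k}^\alpha\sim(1-k/n)_+^{\alpha}$ and $d_{n,k}\sim\tfrac{\alpha}{n}(1-k/n)_+^{\alpha-1}$ that follow from the gamma-function form of $c_{n,k}^\alpha$. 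Replacing these sums by Riemann integrals reduces the decisive estimate to the convergence of $\int_0^1(1-x)^{2\alpha-2}\,dx$, which holds precisely when $\alpha>\tfrac12$; this is exactly where the hypothesis enters, and it foreshadows the sharpness recorded in Theorem~\ref{mainthm2}. Carrying out this Schur bound uniformly in $n$, with the correct exponent $\gamma$ reconciling the weight $s^{-(m+1)}$ with the $(1-k/n)$-profile of the kernel, is the step I expect to demand the most care.

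\emph{Step 4 (convergence).} Granting the uniform bound, the convergence $D_{\mu,m}(\sigma_n^\alpha[f]-f)\to 0$ follows by a standard density argument. Polynomials are dense in $\mathcal H_{\mu,m}$, and for a polynomial $p=\sum_{k\le N}a_k z^k$ one has $\sigma_n^\alpha[p]\to p$ because $c_{n,k}^\alpha\to 1$ as $n\to\infty$ for each fixed $k$ and only finitely many coefficients occur. Given $f\in\mathcal H_{\mu,m}$ and $\varepsilon>0$, choosing $p$ with $\sqrt{D_{\mu,m}(f-p)}<\varepsilon$ and invoking the triangle inequality for the seminorm $\sqrt{D_{\mu,m}(\cdot)}$ together with $\sqrt{D_{\mu,m}(\sigma_n^\alpha[f-p])}\le\sqrt{\kappa}\,\varepsilon$ gives $\limsup_n\sqrt{D_{\mu,m}(\sigma_n^\alpha[f]-f)}\le(1+\sqrt{\kappa})\,\varepsilon$, and letting $\varepsilon\to 0$ completes the proof.
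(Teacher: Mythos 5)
Your Steps 1, 2 and 4 are essentially sound, and they follow a genuinely different route from the paper. Your Step 1 identity is a higher-order Richter--Sundberg-type formula, and it is correct: writing $g=f^{(m)}$, one has $g(z)/(1-\bar\zeta z)=\sum_{s\geqslant 0}\bar\zeta^{\,s}B_s(\zeta)z^s$, so Parseval on circles gives $\frac{1}{2\pi}\int_0^{2\pi}\big|g(re^{i\theta})\big|^2|1-\bar\zeta re^{i\theta}|^{-2}\,d\theta=\sum_s|B_s(\zeta)|^2r^{2s}$, and integrating against $(1-r^2)^m\,2r\,dr$ with $\int_0^1t^s(1-t)^m\,dt=\frac{s!\,m!}{(s+m+1)!}$ yields exactly your expression; all terms are non-negative, so Tonelli justifies every interchange, and your monomial check $D_{\zeta,m}(z^k)=\binom{k}{m}$ is right. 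This is the direct generalization of the $m=1$ method of \cite{MasRans,CesaroM}, whereas the paper instead reduces $D_{\lambda,m}$ to $D_{\sigma,m-1}$ via the local Douglas formula $f=a+(z-\lambda)L_\lambda[f]$ of \cite{GGR2} and develops Hadamard-multiplier theory (Proposition \ref{Hadamard mult and seminorm inequality}, Theorem \ref{Hadamard}); both routes arrive at a $\zeta$-independent triangular matrix, yours acting on partial sums with weights $w_s\asymp s^{-(m+1)}$, the paper's on the coefficients of $L_\lambda[f]$ with weights $\binom{j}{m-1}\asymp j^{m-1}$. Your approach is more self-contained; the paper's buys the multiplier characterization (Theorem \ref{Hadamard}) as an independent result. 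In Step 4, note that the paper deliberately avoids polynomial density, using Lemma \ref{LemmaGMR} instead; your density argument is fine in outline, but density of polynomials in $\mathcal H_{\mu,m}$ is an ingredient you must quote from the literature, as it is not proved in this paper.

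The genuine gap is in Step 3, which is indeed the crux, and your proposed tool fails there. Conjugating your matrix into unweighted $\ell^2$ gives entries $K_{s,j}=\big(\tfrac{j+1}{s+1}\big)^{(m+1)/2}d_{n,j}$ for $j<s$, where $d_{n,j}=c^{\alpha}_{n,j+m}-c^{\alpha}_{n,j+m+1}\asymp n^{-\alpha}(n-j+1)^{\alpha-1}$ is supported in $j\leqslant n-m$, while the row index $s$ runs over \emph{all} integers $>j$: the rows with $s>n$ are genuinely present, since $B_s(\zeta)$ stabilizes for $s\geqslant n-m$ and the summable weights $w_s$ still charge that tail. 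Now take the column $j=n-m$, so $d_{n,j}\asymp n^{-\alpha}$. For any power test sequence $h_s=(s+1)^{\gamma}$, the column half of the Schur test demands $h_j^{-1}\sum_{s>j}K_{s,j}h_s\leqslant C$; the sum diverges outright if $\gamma\geqslant\frac{m-1}{2}$, and if $\gamma<\frac{m-1}{2}$ it is $\asymp d_{n,j}(j+1)\asymp n^{1-\alpha}\to\infty$ for $\frac12<\alpha<1$. The obstruction persists for two-weight power tests $p_j=(j+1)^a$, $q_s=(s+1)^b$, which force $C_1C_2\gtrsim n^{1-\alpha}$. So no power-weight Schur test can deliver the uniform bound, for any $m$ (Schur is only a sufficient condition, so this does not contradict boundedness, but it kills the plan as written).

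The repair is exactly the paper's argument in Theorem \ref{uniformly bounded M-n-alpha}: split off the diagonal, whose norm is at most $1$ since $0\leqslant c^{\alpha}_{n,k}\leqslant 1$, and bound the strictly triangular part by its Hilbert--Schmidt norm. In your normalization this reads
\begin{align*}
\|K'\|_{HS}^2=\sum_{j\leqslant n-m}d_{n,j}^2\,(j+1)^{m+1}\sum_{s>j}(s+1)^{-(m+1)}\asymp\sum_{j\leqslant n-m}(j+1)\,d_{n,j}^2\lesssim n^{1-2\alpha}\sum_{N=1}^{n}N^{2\alpha-2}\asymp 1,
\end{align*}
precisely because $2\alpha-2>-1$, i.e.\ $\alpha>\frac12$; this is the content of the paper's Lemma \ref{apprx partial sum}, and it is the rigorous form of your heuristic $\int_0^1(1-x)^{2\alpha-2}\,dx<\infty$. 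You should also either restrict to $\alpha\in(\frac12,1)$ and invoke \cite[Theorem 43]{Hardy} to pass to all $\alpha>\frac12$, as the paper does, or verify the asymptotics uniformly for large $\alpha$ (they do persist). With Step 3 so repaired and a reference supplied for polynomial density, your proof is complete.
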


It is known that the space $\mathcal H_{\mu,m}$ is contained in the Hardy space $H^2$ (see \cite[Corollary 2.5]{GGR}). This allows us to define a norm $\|\cdot\|_{\mu,m}$ on $\mathcal H_{\mu,m}$ given by 
\[\|f\|_{\mu,m}^2=\|f\|^2_{H^2}+D_{\mu,m}(f),~f\in \mathcal H_{\mu,m},\] 
where $\|f\|_{H^2}$ denotes the norm of $f$ in $H^2.$
As a consequence of Theorem \ref{mainthm1}, we obtain the following corollary.
\begin{cor}\label{maincor} Let $\mu\in\mathcal M_+(\mathbb T)$ and $m\in 
\mathbb N.$ If $\alpha > \frac{1}{2}$, then $\|\sigma_n^{\alpha}[f]-f\|_{\mu, m}\to 0$ as $n\to \infty$ for any  $f\in \mathcal H_{\mu,m}$.
  \end{cor}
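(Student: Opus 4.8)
The plan is to split the full norm into its two defining pieces and dispatch each separately, letting Theorem \ref{mainthm1} carry the genuinely nontrivial part. Since each $\sigma_n^{\alpha}[f]$ is a polynomial it certainly lies in $\mathcal H_{\mu,m}$, so $\sigma_n^{\alpha}[f]-f\in\mathcal H_{\mu,m}$ and the norm expansion
\begin{align*}
\|\sigma_n^{\alpha}[f]-f\|_{\mu,m}^2 = \|\sigma_n^{\alpha}[f]-f\|_{H^2}^2 + D_{\mu,m}\big(\sigma_n^{\alpha}[f]-f\big)
\end{align*}
is legitimate. By the ``moreover'' part of Theorem \ref{mainthm1}, the second summand tends to $0$ as $n\to\infty$. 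Hence the whole statement reduces to showing that the generalized Ces\`aro means of $f$ converge to $f$ in the Hardy-space norm.

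For this residual $H^2$-convergence I would argue directly on Taylor coefficients. Writing $f=\sum_{k\ge0}a_kz^k$, the inclusion $\mathcal H_{\mu,m}\subset H^2$ guarantees $\sum_{k\ge0}|a_k|^2<\infty$. Setting $c_{n,k}=\binom{n-k+\alpha}{\alpha}\binom{n+\alpha}{\alpha}^{-1}$ for $0\le k\le n$, one has $\sigma_n^{\alpha}[f]=\sum_{k=0}^n c_{n,k}a_kz^k$, and by orthonormality of $\{z^k\}$ in $H^2$,
\begin{align*}
\|\sigma_n^{\alpha}[f]-f\|_{H^2}^2 = \sum_{k=0}^n (1-c_{n,k})^2|a_k|^2 + \sum_{k=n+1}^{\infty}|a_k|^2.
\end{align*}
The tail sum vanishes as $n\to\infty$ because $\sum|a_k|^2<\infty$. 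For the first sum, the monotonicity of $m\mapsto\binom{m+\alpha}{\alpha}$ gives $0\le c_{n,k}\le 1$, furnishing the summable majorant $|a_k|^2$; meanwhile, for each fixed $k$ the gamma-function formula yields $c_{n,k}\to 1$, so that $(1-c_{n,k})^2|a_k|^2\to0$ pointwise in $k$. The dominated convergence theorem for series then forces the first sum to $0$ as well.

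Combining the two estimates gives $\|\sigma_n^{\alpha}[f]-f\|_{H^2}\to0$, and together with the vanishing of the Dirichlet-type term this yields $\|\sigma_n^{\alpha}[f]-f\|_{\mu,m}\to0$. I do not anticipate any genuine obstacle: all the analytic weight of the statement is borne by Theorem \ref{mainthm1}, and the remaining $H^2$ claim is the classical fact that, since the multiplier coefficients $c_{n,k}$ lie in $[0,1]$ and converge pointwise to $1$, the Ces\`aro means converge in $H^2$. The only points meriting a line of care are the verification that $c_{n,k}\to1$ for fixed $k$ (immediate from the gamma-function expression for the binomial coefficients) and the observation that $\sigma_n^{\alpha}[f]-f$ indeed belongs to $\mathcal H_{\mu,m}$, so that the norm identity above is valid.
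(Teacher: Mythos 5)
Your proof is correct and takes essentially the same route as the paper: decompose $\|\sigma_n^{\alpha}[f]-f\|_{\mu,m}^2$ into its $H^2$ and $D_{\mu,m}$ components, let the \emph{moreover} part of Theorem \ref{mainthm1} kill the Dirichlet-type term, and then establish $\|\sigma_n^{\alpha}[f]-f\|_{H^2}\to 0$. The only divergence is in that last routine step, where the paper cites the classical summability comparison theorem from \cite{Hardy} applied to $\|s_n[f]-f\|_{H^2}\to 0$, while you instead prove the $H^2$ convergence directly by dominated convergence on Taylor coefficients (using $0\leqslant c_{n,k}\leqslant 1$ and $c_{n,k}\to 1$ for fixed $k$) --- a self-contained and equally valid substitute for that citation.
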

 
 Continuing our investigation into the convergence behavior of the sequence of generalized Cesàro means $\{\sigma_n^{\alpha}[f]\}$ to the respective function $f$ when $\alpha\leqslant \frac{1}{2}$ within the higher order weighted Dirichlet-type space $\mathcal H_{\mu,m}$, we bring attention to the following theorem.

 \begin{theorem}\label{mainthm2}
 Let $m\in\mathbb N$ and $\lambda\in \mathbb T$ and $\alpha=\frac{1}{2}.$ There exists a function $f\in \mathcal H_{\lambda,m}$ such that $D_{\lambda,m}\big(\sigma_n^{\alpha}[f]-f\big) \nrightarrow 0$ as $n\rightarrow\infty$.
 \end{theorem}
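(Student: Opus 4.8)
After a rotation we may assume $\lambda=1$: the map $f\mapsto f_\lambda$, $f_\lambda(z)=f(\lambda z)$, is an isometry of $(\mathcal H_{\lambda,m},\|\cdot\|_{\mu,m})$ onto $(\mathcal H_{1,m},\|\cdot\|_{\mu,m})$ satisfying $D_{\lambda,m}(f)=D_{1,m}(f_\lambda)$, and it commutes with every $\sigma_n^{1/2}$ (both simply multiply the $k$-th Taylor coefficient by $\phi_{n,k}=\binom{n-k+1/2}{1/2}\big/\binom{n+1/2}{1/2}$). The first step is to record a coefficient formula for the seminorm. Expanding $|z-1|^{-2}=\sum_{p,q\ge0}z^p\bar z^q$ in the defining integral, evaluating the resulting Beta integrals $\int_0^1 t^a(1-t)^m\,dt$, and collapsing the double sum by a telescoping identity, one obtains for $f=\sum_k a_k z^k$
\begin{equation*}
D_{1,m}(f)=\sum_{i=m}^{\infty}\binom{i-1}{m-1}\,\big|T_i\big|^2,\qquad T_i:=\sum_{j\ge i}a_j ,
\end{equation*}
which for $m=1$ reduces to the familiar $D_1(f)=\sum_{i\ge1}|\sum_{j\ge i}a_j|^2$. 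This identity diagonalises the seminorm in the \emph{tail} variables $T_i$, with weights $w_i=\binom{i-1}{m-1}\asymp i^{m-1}$, and turns $\sigma_n^{1/2}$ into an explicit upper-triangular operator $M_n$ on the weighted space $\ell^2(w)$.

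Next I would isolate the seminorm through a uniform boundedness argument. Each $\sigma_n^{1/2}$ is a bounded operator on the Hilbert space $(\mathcal H_{1,m},\|\cdot\|_{\mu,m})$ (see \cite{GGR} for completeness), since it produces a polynomial of degree $\le n$ whose coefficients are bounded linear functionals of $f$ via $\mathcal H_{1,m}\hookrightarrow H^2$. A dominated-convergence computation on the Taylor coefficients shows $\|\sigma_n^{1/2}[f]-f\|_{H^2}\to0$ for every $f$. Hence, were $D_{1,m}(\sigma_n^{1/2}[f]-f)\to0$ to hold for \emph{every} $f\in\mathcal H_{1,m}$, we would get $\sigma_n^{1/2}[f]\to f$ in $\|\cdot\|_{\mu,m}$ for every $f$, and Banach--Steinhaus would force $\sup_n\|\sigma_n^{1/2}\|<\infty$. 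It thus suffices to prove $\sup_n\|\sigma_n^{1/2}\|_{\mathcal H_{1,m}\to\mathcal H_{1,m}}=\infty$; the function $f$ then produced by Banach--Steinhaus satisfies $\|\sigma_n^{1/2}[f]-f\|_{\mu,m}\nrightarrow0$, and since the $H^2$-part tends to $0$, necessarily $D_{1,m}(\sigma_n^{1/2}[f]-f)\nrightarrow0$.

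The heart of the proof, and the step I expect to be hardest, is the lower bound $\sup_n\|\sigma_n^{1/2}\|=\infty$. In tail coordinates the action reads $\tilde T_i=\phi_{n,i}T_i+\sum_{j>i}(\phi_{n,j}-\phi_{n,j-1})T_j$, and the Stirling asymptotics $\phi_{n,k}\approx(1-k/n)^{1/2}$ give increments $\phi_{n,k}-\phi_{n,k-1}\approx-\tfrac12\,n^{-1/2}(n-k)^{-1/2}$; this $(n-k)^{-1/2}$ growth near the top index is the exact signature of the exponent $\tfrac12$. A crucial warning is that the blow-up is invisible on a single vector: testing $M_n$ on one tail basis vector $e_k$ gives $\|M_n e_k\|_w^2/\|e_k\|_w^2\asymp (k/m)\,(\phi_{n,k}-\phi_{n,k-1})^2\asymp \tfrac{k}{m\,n(n-k)}\lesssim_m 1$, the weight ratio $\sum_{i<k}w_i/w_k\asymp k/m$ being exactly balanced by the smallness of the squared increment. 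One must therefore construct test functions $f_n$ whose tail sums $T_j$ occupy a block of indices of length comparable to $n$ with matched signs, so that the increments add \emph{coherently} rather than cancel, all the while tracking the polynomial weight $w_i\asymp i^{m-1}$; establishing $D_{1,m}(\sigma_n^{1/2}[f_n])/\|f_n\|_{\mu,m}^2\to\infty$ for such a family is the main obstacle. This is precisely where the criticality of $\alpha=\tfrac12$ and the dependence on $m$ enter; the case $m=1$ is carried out in \cite{CesaroM}, and the general case should follow the same coherent-summation scheme with the extra weight $i^{m-1}$ accounted for.
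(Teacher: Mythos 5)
Your reduction is sound, and it in fact coincides with the paper's own machinery: your tail-variable identity $D_{1,m}(f)=\sum_{i\geqslant m}\binom{i-1}{m-1}|T_i|^2$ is exactly the coefficient form of the higher-order local Douglas formula of \cite{GGR2} that the paper relies on ($f=a+(z-\lambda)L_{\lambda}[f]$ with $D_{\lambda,m}(f)=D_{\sigma,m-1}(L_{\lambda}[f])$, and $b_j=T_{j+1}$); your upper-triangular operator $M_n$ in tail coordinates is the paper's matrix $T_c(m)$, i.e.\ the operator $A_c(m)$ on $\hat{\mathcal H}_{\sigma,m-1}$; and your Banach--Steinhaus reduction is precisely how the paper concludes. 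The problem is that your proof stops exactly where the theorem lives. You yourself flag the lower bound $\sup_n\|M_n\|=\infty$ as ``the main obstacle'' and express the expectation that the $m=1$ computation of \cite{CesaroM} ``should follow'' with the weight $i^{m-1}$ tracked---but that estimate \emph{is} the content of Theorem \ref{mainthm2}, so leaving it as a program is a genuine gap. The paper carries it out in Proposition \ref{Propo3.4}: inside the diagonally conjugated matrix it isolates the $(s+1)\times(n-s+1)$ submatrix all of whose rows equal $(c_{s+1}-c_s,\dots,c_{n+1}-c_n)$---a rank-one block whose norm is exactly $\sqrt{s+1}\,\bigl(\sum_{k=s}^{n}|c_{k+1}-c_k|^2\bigr)^{1/2}$, which is the rigorous implementation of your ``coherent summation'' with matched signs---multiplies by the worst-case weight ratio $\bigl(\tbinom{s}{m-1}/\tbinom{n}{m-1}\bigr)^{1/2}\asymp (s/n)^{(m-1)/2}$, uses $\sum_{k=s}^{n}|c_{k+1}-c_k|^2\geqslant \log(n+2-s)/(4(n+1))$, and chooses $s=\lfloor n/2\rfloor$ to get $\|A_{g_n}(m)\|\geqslant 2^{-(m+1)}\sqrt{\log(2+n/2)}\to\infty$. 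The point your sketch does not yet secure is that the $m$-dependence costs only the constant $2^{-(m+1)}$ rather than a power of $n$, which is exactly what the choice $s\asymp n$ delivers.

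A second, smaller gap: you run asymptotics on the true Ces\`aro coefficients $\phi_{n,k}=\binom{n-k+1/2}{1/2}\big/\binom{n+1/2}{1/2}$, whereas the clean increment formula and the logarithmic bound are computed for the simplified coefficients $c_k=(1-\tfrac{k}{n+1})^{1/2}$. The paper bridges the two by invoking \cite[Theorem 3.2]{CesaroM}, which says $\{h_n*f\}$ converges to $f$ in $\mathcal H_{\lambda,m}$ if and only if $\{\phi_n*f\}$ does; without that transfer result, or a uniform two-sided comparison of the increments (including the edge $k$ near $n$, where your approximation $\phi_{n,k}-\phi_{n,k-1}\approx -\tfrac12 n^{-1/2}(n-k)^{-1/2}$ needs separate justification), a blow-up for the model coefficients does not yet yield the statement for $\sigma_n^{1/2}$. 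Everything else in your outline---the rotation to $\lambda=1$, the boundedness of each $\sigma_n^{1/2}$, the $H^2$-convergence, and the correct observation that single basis vectors cannot detect the blow-up---is accurate and consistent with the paper.
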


We will use the techniques of the Hadamard multiplication operators of $\mathcal H_{\mu,1}$ as developed in \cite{MasRans}, in order to prove our results.  In Section \ref{Hadamard Product}, first we extend the theory of Hadamard multiplication operators of $\mathcal H_{\mu,1}$ to higher order weighted Dirichlet type spaces $\mathcal H_{\mu,m}$ and then we proceed to establish the main results.

\section{Hadamard Multiplication Operators on $\mathcal H_{\mu,m}$}\label{Hadamard Product} 
For two formal power series $f(z)= \sum_{j=0}^{\infty}a_jz^j$ and $g(z)= \sum_{j=0}^{\infty}b_jz^j,$ the Hadamard product $f * g$ of $f$ and $g$ is defined by a formal power series, given by the formula 
\begin{align*}
(f * g)(z):= \sum\limits_{j=0}^{\infty} a_jb_jz^j.
\end{align*}
In view of the Cauchy-Hadamard formula for the radius of convergence of a power series, it is straightforward to verify that $f * g$ is in $\mathcal O(\mathbb D)$ whenever $f$ and $g$ belong to $\mathcal O(\mathbb D).$ For $\mu\in\mathcal M_{+}(\mathbb T)$ and $m\in\mathbb N,$ a function $c\in \mathcal O(\mathbb D)$ is said to be a Hadamard multiplier of $\mathcal H_{\mu,m}$ if $(c * f)\in \mathcal H_{\mu,m}$ for every $f\in \mathcal H_{\mu,m}.$ The following proposition characterizes the Hadamard multiplier of $\mathcal H_{\mu,m}.$
Before delving into the proposition, for $\mu\in\mathcal M_+(\mathbb T)\setminus \{0\},$ we introduce a linear space denoted as $\hat{\mathcal H}_{\mu,m}$, associated to the linear space $\mathcal H_{\mu,m},$ for the sake of computational simplicity. 
\begin{align}\label{seminorm Hilbert space}
\hat{\mathcal H}_{\mu,m} := \Big\{ f\in \mathcal O(\mathbb D) : f(z)= \sum\limits_{j=m}^{\infty}a_jz^j,~~a_j\in\mathbb C,~~ D_{\mu,m}(f) < \infty\Big\}.
\end{align} 
When $\mu=0,$ we define $\hat{\mathcal H}_{\mu,m}$ to be the Hardy space $H^2.$
For a non-zero $\mu$ in $\mathcal M_+(\mathbb T),$ since $\sqrt{D_{\mu,m}(\cdot)}$ is a semi-norm on the linear space $\mathcal H_{\mu,m}$ and $D_{\mu,m}(f)=0$ if and only if $f$ is a polynomial of degree at most $(m-1),$ it follows that $\sqrt{D_{\mu,m}(\cdot)}$ induces a norm on $\hat{\mathcal H}_{\mu,m}.$
It is straightforward to verify that $\hat{\mathcal H}_{\mu,m}$ turns out to be a Hilbert space with respect to this norm.
Furthermore, as established in the following lemma, $\hat{\mathcal H}_{\mu,m}$ turns is a reproducing kernel Hilbert space on $\mathbb D.$

\begin{lemma}\label{RKHS Lemma}
Let $\mu\in\mathcal M_{+}(\mathbb T)\setminus \{0\}$ and $m\in\mathbb N.$ The linear space $\hat{\mathcal H}_{\mu,m},$ equipped with the norm  $\sqrt{D_{\mu,m}(\cdot)},$ is a reproducing kernel Hilbert space on $\mathbb D.$
\end{lemma}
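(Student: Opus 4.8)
The plan is to use the fact, already recorded above, that $\hat{\mathcal H}_{\mu,m}$ is a Hilbert space under the norm $\sqrt{D_{\mu,m}(\cdot)}$, and to reduce the reproducing kernel property to the boundedness of each point evaluation functional $E_w\colon f\mapsto f(w)$, $w\in\mathbb D$. Once every $E_w$ is bounded, the Riesz representation theorem produces, for each $w$, a vector $k_w\in\hat{\mathcal H}_{\mu,m}$ with $f(w)=\langle f,k_w\rangle$, and $K(z,w):=k_w(z)$ serves as the reproducing kernel. Rather than estimate $f(w)$ directly, I would prove the stronger statement that the inclusion $\iota\colon\hat{\mathcal H}_{\mu,m}\hookrightarrow\mathcal O(\mathbb D)$ is continuous, where $\mathcal O(\mathbb D)$ carries the Fréchet topology of uniform convergence on compact subsets. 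Since point evaluations are continuous on $\mathcal O(\mathbb D)$, continuity of $\iota$ immediately yields the boundedness of every $E_w$ on $\hat{\mathcal H}_{\mu,m}$.

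To prove $\iota$ is continuous I would invoke the closed graph theorem, both spaces being Fréchet. Thus the main step is to verify that the graph of $\iota$ is closed: if $(f_n)\subset\hat{\mathcal H}_{\mu,m}$ satisfies $D_{\mu,m}(f_n-f)\to0$ and $f_n\to g$ uniformly on compact subsets of $\mathbb D$, then $f=g$. Writing $h_n:=f_n-f$ and $h:=g-f$, this reduces to showing that $D_{\mu,m}(h_n)\to0$ together with $h_n\to h$ locally uniformly forces $h=0$. Local uniform convergence of $h_n$ to $h$ propagates to the $m$-th derivatives, $h_n^{(m)}\to h^{(m)}$ locally uniformly, by the Cauchy estimates; hence the non-negative integrands $|h_n^{(m)}(z)|^2P_{\!\mu}(z)(1-|z|^2)^{m-1}$ converge pointwise to $|h^{(m)}(z)|^2P_{\!\mu}(z)(1-|z|^2)^{m-1}$. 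Fatou's lemma then gives $D_{\mu,m}(h)\leqslant\liminf_n D_{\mu,m}(h_n)=0$.

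It remains to deduce $h=0$ from $D_{\mu,m}(h)=0$, and this is where the two defining features of $\hat{\mathcal H}_{\mu,m}$ combine. By the property recorded before the lemma, $D_{\mu,m}(h)=0$ forces $h$ to be a polynomial of degree at most $m-1$. On the other hand, each $h_n$ lies in $\hat{\mathcal H}_{\mu,m}$ and so has vanishing Taylor coefficients of order $<m$; since Taylor coefficients are continuous under local uniform convergence, $h$ also vanishes to order $m$ at the origin. A polynomial of degree at most $m-1$ that vanishes to order $m$ is identically zero, so $h=0$, the graph is closed, and the argument concludes as described. The only genuinely delicate point is the lower semicontinuity of $D_{\mu,m}$ along locally uniformly convergent sequences, which the Fatou step handles cleanly. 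I note that one could instead argue concretely, proving a bound $\|f\|_{H^2}^2\leqslant C(\mu,m)\,D_{\mu,m}(f)$ on $\hat{\mathcal H}_{\mu,m}$ by writing $D_{\mu,m}(f)=\int_{\mathbb T}D_{\zeta,m}(f)\,d\mu(\zeta)$ and establishing a uniform-in-$\zeta$ coercivity estimate for the local integrals $D_{\zeta,m}$ on functions vanishing to order $m$; this route is more computational, and the closed graph argument is the cleaner one.
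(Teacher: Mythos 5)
Your proof is correct, but it takes a genuinely different route from the paper's. The paper argues quantitatively: it uses the lower bound $P_{\!\mu}(z)\geqslant\frac{\mu(\mathbb T)}{4}(1-|z|^2)$ (citing \cite[Eq.\ (2.6)]{GGR}) to dominate $D_{\mu,m}(f)$ from below by a weighted Bergman integral of $f^{(m)}$, evaluates that integral coefficientwise via orthogonality of monomials to obtain $D_{\mu,m}(f)\geqslant \frac{\mu(\mathbb T)}{4(m-1)!}\sum_{j\geqslant m}\frac{|a_j|^2}{j+1}\frac{\Gamma(j+1)}{\Gamma(j-m+1)}$, and then applies Cauchy--Schwarz to get an explicit pointwise bound $|f(w)|^2\leqslant C_w\,D_{\mu,m}(f)$, which is exactly boundedness of each evaluation functional with an explicit norm estimate. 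You instead run a soft argument: the closed graph theorem for the inclusion $\hat{\mathcal H}_{\mu,m}\hookrightarrow\mathcal O(\mathbb D)$, with closedness of the graph supplied by Fatou (legitimate, since $P_{\!\mu}(z)<\infty$ for each fixed $z\in\mathbb D$ and the integrands converge pointwise by Cauchy estimates on derivatives) combined with precisely the two structural facts the paper records just before the lemma: $D_{\mu,m}(h)=0$ forces $h$ to be a polynomial of degree at most $m-1$, while membership of the $h_n$ in $\hat{\mathcal H}_{\mu,m}$ forces the limit to vanish to order $m$ at the origin. All your steps check out --- the closed graph theorem is valid between a Hilbert space and the Fr\'echet space $\mathcal O(\mathbb D)$ --- and your reliance on completeness of $\hat{\mathcal H}_{\mu,m}$ is on the same footing as the paper, which asserts completeness as ``straightforward to verify'' immediately before the lemma; there is no hidden circularity, since completeness can be checked without pointwise bounds on $f$ itself (the norm is an $L^2$-norm on $f^{(m)}$ against a weight bounded below on compact subsets, and $f$ is recovered from $f^{(m)}$ by $m$-fold antidifferentiation thanks to the vanishing of the first $m$ Taylor coefficients). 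What the paper's computation buys that yours does not is quantitative information: an explicit majorant for the evaluation functional, hence growth control of the kernel diagonal near $\partial\mathbb D$; what yours buys is brevity and robustness, avoiding all Gamma-function estimates. Your closing remark about a concrete coercivity bound is indeed close in spirit to what the paper actually does, except that the paper estimates through a weighted Bergman norm rather than through $H^2$.
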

\begin{proof}
Let $f\in \hat{\mathcal H}_{\mu,m}$ and $f(z)=\sum_{j=m}^{\infty}a_jz^j,$  $z\in\mathbb D.$ Then we have
\begin{align*}
f^{(m)}(z) &= \sum\limits_{j=m}^{\infty} \frac{\Gamma(j+1)}{\Gamma(j-m+1)}a_jz^{j-m},~~~~~~~z\in\mathbb D.
\end{align*}
In view of \cite[Eq (2.6), p. 454]{GGR}, we obtain that
\begin{align*}
D_{\mu,m}(f)& \geqslant \frac{\mu(\mathbb T)}{4m!(m-1)!} \int_{\mathbb D}|f^{(m)}|^2(z)(1-|z|^2)^m dA(z)\\
&=\frac{\mu(\mathbb T)}{4m!(m-1)!} \sum\limits_{j=m}^{\infty} |a_j|^2 \frac{(\Gamma(j+1))^2}{(\Gamma(j-m+1))^2} \int_{\mathbb D} |z|^{2(j-m)}(1-|z|^2)^m dA(z)\\
 &= \frac{\mu(\mathbb T)}{4m!(m-1)!}\sum\limits_{j=m}^{\infty} |a_j|^2 \frac{(\Gamma(j+1))^2}{(\Gamma(j-m+1))^2} \frac{\Gamma(m+1) \Gamma(j-m+1)}{\Gamma(j+2)} \\
& =\frac{\mu(\mathbb T)}{4(m-1)!}\sum\limits_{j=m}^{\infty} \frac{|a_j|^2}{(j+1)} \frac{\Gamma(j+1)}{\Gamma(j-m+1)} 
\end{align*}
Using Cauchy-Schwarz inequality, we have for each $w\in\mathbb D,$ 
\begin{align*}
|f(w)|^2 = |\sum_{j=m}^{\infty}a_jw^j|^2 &\leqslant \Big(\sum\limits_{j=m}^{\infty}  \frac{|a_j|^2}{(j+1)} \frac{\Gamma(j+1)}{\Gamma(j-m+1)}\Big)   \Big(\sum\limits_{j=m}^{\infty} \frac{(j+1)\Gamma(j-m+1)}{\Gamma(j+1)} |w|^{2j}\Big)\\
&= \frac{4(m-1)!}{\mu(\mathbb T)}\Big(\sum\limits_{j=m}^{\infty} \frac{(j+1)\Gamma(j-m+1)}{\Gamma(j+1)} |w|^{2j}\Big) D_{\mu,m}(f).
\end{align*}
This shows that the evaluation at each point $w\in \mathbb D$ is a bounded linear functional on $\hat{\mathcal H}_{\mu,m}$ and hence $\hat{\mathcal H}_{\mu,m}$ is a reproducing kernel Hilbert space on $\mathbb D.$ This completes the proof.
\end{proof}

Now we establish the result that gives a characterization of the Hadamard multiplier of $\mathcal H_{\mu,m}.$
\begin{prop}\label{Hadamard mult and seminorm inequality}
Let $\mu\in\mathcal M_{+}(\mathbb T)$ and $m\in\mathbb N.$ A function $c\in \mathcal O(\mathbb D)$ is a Hadamard multiplier of $\mathcal H_{\mu,m}$ if and only if there exists a constant $\kappa > 0$ such that
\begin{align*}
D_{\mu,m}(c*f) \leqslant \kappa D_{\mu,m}(f),~~~~~~~\mbox{for all~~~~}f\in \mathcal H_{\mu,m}.
\end{align*}
\end{prop}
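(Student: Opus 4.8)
The plan is to read the stated inequality as the boundedness of the Hadamard multiplication operator $f \mapsto c*f$ and to extract this boundedness from the bare multiplier property by an application of the closed graph theorem. The backward implication is immediate: if $D_{\mu,m}(c*f) \leqslant \kappa D_{\mu,m}(f)$ holds for all $f\in\mathcal H_{\mu,m}$, then $D_{\mu,m}(c*f)<\infty$ whenever $f\in\mathcal H_{\mu,m}$, and since $c*f\in\mathcal O(\mathbb D)$ we get $c*f\in\mathcal H_{\mu,m}$, so $c$ is a Hadamard multiplier.

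For the forward implication, the first step is to pass from the semi-normed space $\mathcal H_{\mu,m}$ to the genuine Hilbert space $\hat{\mathcal H}_{\mu,m}$ of \eqref{seminorm Hilbert space}, since the closed graph theorem requires completeness. I would record that $D_{\mu,m}$ is insensitive to Taylor coefficients of degree below $m$: writing $f\in\mathcal H_{\mu,m}$ as $p+g$ with $\deg p\leqslant m-1$ and $g\in\hat{\mathcal H}_{\mu,m}$, one has $f^{(m)}=g^{(m)}$, so $D_{\mu,m}(f)=D_{\mu,m}(g)$; likewise $c*f=c*p+c*g$ with $c*p$ a polynomial of degree at most $m-1$, so $D_{\mu,m}(c*f)=D_{\mu,m}(c*g)$. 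Hence it suffices to establish the inequality on $\hat{\mathcal H}_{\mu,m}$. For $g=\sum_{j\geqslant m}a_jz^j\in\hat{\mathcal H}_{\mu,m}$ and $c=\sum_j c_jz^j$, the Hadamard product $c*g=\sum_{j\geqslant m}c_ja_jz^j$ again starts at degree $m$, and the multiplier hypothesis gives $D_{\mu,m}(c*g)<\infty$; thus $T_cg:=c*g$ defines a linear map of $\hat{\mathcal H}_{\mu,m}$ into itself.

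The key step is to show that $T_c$ has a closed graph. Here I would invoke Lemma \ref{RKHS Lemma}: since $\hat{\mathcal H}_{\mu,m}$ is a reproducing kernel Hilbert space of holomorphic functions on $\mathbb D$, each Taylor coefficient functional $g\mapsto a_j$ is bounded (for instance via the Cauchy estimate $|a_j|\leqslant r^{-j}\max_{|z|=r}|g(z)|$ combined with the pointwise bound $|g(z)|\leqslant \sqrt{k(z,z)}\,\|g\|$), so norm convergence in $\hat{\mathcal H}_{\mu,m}$ forces coefficientwise convergence. Now suppose $g_n\to g$ and $T_cg_n\to h$ in $\hat{\mathcal H}_{\mu,m}$. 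Comparing coefficients, the $j$-th coefficient of $T_cg_n$, namely $c_j a_j^{(n)}$, converges both to $c_j a_j$ (the $j$-th coefficient of $T_cg$) and to the $j$-th coefficient of $h$; hence $h=T_cg$ and $T_c$ is closed. By the closed graph theorem $T_c$ is bounded, and taking $\kappa=\|T_c\|^2$ yields $D_{\mu,m}(c*g)=D_{\mu,m}(T_cg)\leqslant\kappa\,D_{\mu,m}(g)$, which by the reduction above gives the inequality on all of $\mathcal H_{\mu,m}$.

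The main obstacle I anticipate is not any single estimate but the bookkeeping of the two-fold degeneracy that must be handled before the closed graph theorem can be applied: the semi-norm $D_{\mu,m}$ annihilates low-degree polynomials (forcing the passage to $\hat{\mathcal H}_{\mu,m}$), and one must check that the multiplier property on $\mathcal H_{\mu,m}$ restricts correctly to a self-map of $\hat{\mathcal H}_{\mu,m}$ and that the resulting bound transfers back. Once the problem is correctly framed on $\hat{\mathcal H}_{\mu,m}$, the continuity of the coefficient functionals supplied by Lemma \ref{RKHS Lemma} makes the closedness verification routine.
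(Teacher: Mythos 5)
Your proposal is correct and follows essentially the same route as the paper: reduce to the Hilbert space $\hat{\mathcal H}_{\mu,m}$ via the decomposition $f=p+g$, verify the graph of the Hadamard multiplication operator is closed using continuity of the Taylor coefficient functionals guaranteed by Lemma \ref{RKHS Lemma}, and apply the closed graph theorem. The only difference is cosmetic: where the paper cites \cite[Lemma 4.1]{CS-84} for the continuity of the functionals $f\mapsto f^{(j)}(0)/j!$, you justify it directly via Cauchy estimates and the pointwise kernel bound, which is an equally valid, self-contained substitute.
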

\begin{proof}
Suppose that a function $c\in \mathcal O(\mathbb D)$ is a Hadamard multiplier of $\mathcal H_{\mu,m}.$ Thus $(c*f)\in \hat{\mathcal H}_{\mu,m}$ for every $f\in \hat{\mathcal H}_{\mu,m}.$ 
Consider the linear transformation $H_c: \hat{\mathcal H}_{\mu,m} \to \hat{\mathcal H}_{\mu,m}$ given by $H_c(f)=c*f,$ for $f\in \hat{\mathcal H}_{\mu,m}.$ 
Since, by Lemma \ref{RKHS Lemma}, $\hat{\mathcal H}_{\mu,m}$ is a reproducing kernel Hilbert space consisting of analytic functions on $\mathbb D$, it follows that the linear functional $ L_j: f\to \frac{f^{(j)}(0)}{j!}$ on $\hat{\mathcal H}_{\mu,m}$ is continuous for every $j\geqslant m$ (see \cite[Lemma 4.1]{CS-84}). This gives us that the graph of $H_c$ is closed, and hence by closed graph theorem, $H_c$ is bounded. That is, $D_{\mu,m}(c*f) \leqslant \|H_c\| D_{\mu,m}(f)$ for every $f\in \hat{\mathcal H}_{\mu,m}.$ For an arbitrary $f\in \mathcal H_{\mu,m},$ we write
\begin{align*}
f= p+g, 
\end{align*}
where $p$ is a polynomial of degree at most $m-1$ and $g\in \hat{\mathcal H}_{\mu,m}.$
Hence we obtain 
\begin{align*}
D_{\mu,m}(c*f)=  D_{\mu,m}(c*g) \leqslant \|H_c\| D_{\mu,m}(g) = \|H_c\| D_{\mu,m}(f). 
\end{align*}
The converse part is trivial.
\end{proof}
For any $c(z)=\sum_{j=0}^{\infty}c_jz^j$ in $\mathcal O(\mathbb D)$ and for any $m\in\mathbb N,$ we consider the infinite matrix $T_c(m)$ defined by
\begin{align*}
T_c(m):= 
\begin{pmatrix}
c_{m} & c_{m+1}-c_{m} & c_{m+2}-c_{m+1} & c_{m+3}-c_{m+2} & \cdots \\
0 & c_{m+1}  & c_{m+2}-c_{m+1} & c_{m+3}-c_{m+2} & \cdots \\
0 & 0 & c_{m+2}  & c_{m+3}-c_{m+2} & \cdots \\
0 & 0 & 0 & c_{m+3} & \cdots \\
\vdots & \vdots & \vdots & \vdots & \ddots
\end{pmatrix}.
\end{align*} 
Let $W_m$ be the linear space in $\mathcal O(\mathbb D)$ given by $W_m= span \{z^j: j\geqslant m-1\}.$ The matrix $T_c(m)$ induces a linear transformation $A_c(m)$ on the  linear space $W_m$, given by 
\begin{align*}
A_c(m) \Big(\sum\limits_{j=m-1}^{n} b_j z^j\Big):=  \sum\limits_{j=m-1}^{n} \Big(c_{j+1}b_j + \sum\limits_{k=j+1}^{n}(c_{k+1}-c_k)b_k \Big) z^j.
\end{align*}
Let $\sigma$ be the normalized Lebesgue measure on $\mathbb T.$ Consider the Hilbert space $\hat{\mathcal H}_{\sigma,m-1}$ associated with the norm $\sqrt{D_{\sigma,m-1}(\cdot)},$ as defined in \eqref{seminorm Hilbert space}. 
Suppose $A_c(m)$ induces a bounded operator on the Hilbert space $\hat{\mathcal H}_{\sigma,m-1}$. Since $\{z^j:j\geqslant m-1\}$ is an orthogonal basis for $\hat{\mathcal H}_{\sigma,m-1},$ it easily follows that 
\begin{align}\label{eqn5}
 D_{\sigma,m-1} \Big(A_c(m)^*(z^{m-1})\Big) =|c_m|^2 + \sum_{j=m}^{\infty}|c_{j+1}-c_{j}|^2 \binom{j}{m-1}^{-1}.
\end{align}
Let $b(z)=\sum_{j=0}^{\infty} b_j z^j \in \mathcal H_{\sigma,m-1}.$ Then we have $D_{\sigma, m-1}(b)=\sum _{j=m-1}^\infty |b_j|^2 \binom{j}{m-1} < \infty$. This together with \eqref{eqn5} shows that the infinite series  $\sum_{k=j+1}^{\infty}(c_{k+1}-c_k)b_k$ converges absolutely for every $j\geqslant m$. 
We indeed have that
\begin{align}\label{A_c(m)}
A_c(m) \Big(\sum\limits_{j=m-1}^{\infty} b_j z^j\Big)=  \sum\limits_{j=m-1}^{\infty} \Big(c_{j+1}b_j + \sum\limits_{k=j+1}^{\infty}(c_{k+1}-c_k)b_k \Big) z^j.
\end{align}

The following theorem tells us that $A_c(m)$ being a bounded linear operator is equivalent to $c$ being a Hadamard multiplier of $\mathcal H_{\mu,m}$ for every $\mu\in\mathcal M_+(\mathbb T).$ 
\begin{theorem}\label{Hadamard}
Let $m\in\mathbb N,$ $c\in \mathcal O(\mathbb D).$ The following statements are equivalent.
\begin{itemize}
\item[(i)] $c$ is a Hadamard multiplier of $\mathcal H_{\mu,m}$ for every $\mu\in\mathcal M_+(\mathbb T).$
\item[(ii)] $c$ is a Hadamard multiplier of $\mathcal H_{\lambda,m}$ for some $\lambda\in\mathbb T.$
\item[(iii)] The transformation $A_c(m)$ defines a bounded operator on $\hat{\mathcal H}_{\sigma,m-1}.$ 
 \end{itemize}
Moreover, in this case, for every $\mu\in\mathcal M_+(\mathbb T),$ we have 
\begin{align*}
D_{\mu,m}(c*f) \leqslant \|A_c(m)\|^2 D_{\mu,m}(f),~~~~f\in \mathcal H_{\mu,m}.
\end{align*}

\end{theorem}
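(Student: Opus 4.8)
The plan is to reduce everything to a single ``transfer identity'' that realizes each local Dirichlet integral $D_{\lambda,m}$ as the norm of $\hat{\mathcal H}_{\sigma,m-1}$ after an explicit change of variables, under which Hadamard multiplication by $c$ becomes \emph{exactly} the operator $A_c(m)$. The first, easy ingredient is the averaging formula
\[
D_{\mu,m}(f)=\int_{\mathbb T} D_{\lambda,m}(f)\,d\mu(\lambda),
\]
which follows from $P_{\!\mu}(z)=\int_{\mathbb T}\frac{1-|z|^2}{|z-\zeta|^2}\,d\mu(\zeta)$ and Tonelli's theorem applied to the nonnegative integrand. This lets me deduce the multiplier inequality for an arbitrary $\mu$ from the corresponding inequalities at all point masses $\delta_\lambda$, with a single constant.

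The central ingredient is the following local formula. For $f(z)=\sum_{k\geqslant 0}a_kz^k$ and $\lambda\in\mathbb T$, set $\gamma_j:=\sum_{k>j}a_k\lambda^{k-j-1}$ and $(V_\lambda f)(z):=\sum_{j\geqslant m-1}\lambda^j\gamma_j z^j$. I claim $V_\lambda$ is a surjective isometry from $\hat{\mathcal H}_{\lambda,m}$ onto $\hat{\mathcal H}_{\sigma,m-1}$, namely
\[
D_{\lambda,m}(f)=\sum_{j\geqslant m-1}\binom{j}{m-1}|\gamma_j|^2=D_{\sigma,m-1}(V_\lambda f),
\]
and that it intertwines Hadamard multiplication with $A_c(m)$, i.e. $V_\lambda(c*f)=A_c(m)\,V_\lambda f$. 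The isometry I would prove by expanding $|f^{(m)}|^2$ together with $\frac{1}{|z-\lambda|^2}=\sum_{p,q\geqslant 0}\bar\lambda^p\lambda^q z^p\bar z^q$, integrating term by term against $(1-|z|^2)^m$ via the Beta evaluation
\[
\int_{\mathbb D}|z|^{2p}(1-|z|^2)^m\,dA(z)=\frac{p!\,m!}{(p+m+1)!},
\]
and resumming; the substitution $a_k=\gamma_{k-1}-\lambda\gamma_k$ diagonalizes the resulting quadratic form with precisely the weights $\binom{j}{m-1}$ (this reduces to the Richter--Sundberg local Dirichlet formula when $m=1$). The intertwining is a summation by parts: substituting $a_k=\gamma_{k-1}-\lambda\gamma_k$ and telescoping produces exactly the differences $c_{k+1}-c_k$ of the matrix $T_c(m)$, and only the components $\gamma_j$ with $j\geqslant m-1$ survive, which is consistent with $A_c(m)$ acting on $\hat{\mathcal H}_{\sigma,m-1}$. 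Since $|\lambda^j|=1$, the diagonal twist $\gamma_j\mapsto\lambda^j\gamma_j$ is unitary and fixes both the norm and the form of $A_c(m)$.

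Granting the transfer identity, the equivalences fall out. For (iii)$\Rightarrow$(i), if $A_c(m)$ is bounded then for each $\lambda$,
\[
D_{\lambda,m}(c*f)=D_{\sigma,m-1}\big(A_c(m)V_\lambda f\big)\leqslant \|A_c(m)\|^2 D_{\sigma,m-1}(V_\lambda f)=\|A_c(m)\|^2 D_{\lambda,m}(f),
\]
and integrating against $\mu$ through the averaging formula yields $D_{\mu,m}(c*f)\leqslant \|A_c(m)\|^2 D_{\mu,m}(f)$ for every $\mu$; this is the ``moreover'' bound and, by Proposition \ref{Hadamard mult and seminorm inequality}, gives (i). The implication (i)$\Rightarrow$(ii) is immediate by taking $\mu=\delta_\lambda$. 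For (ii)$\Rightarrow$(iii), Proposition \ref{Hadamard mult and seminorm inequality} supplies $\kappa$ with $D_{\lambda,m}(c*f)\leqslant\kappa D_{\lambda,m}(f)$, and transporting this through the surjective isometry $V_\lambda$ turns it into $D_{\sigma,m-1}(A_c(m)g)\leqslant\kappa D_{\sigma,m-1}(g)$ for all $g\in\hat{\mathcal H}_{\sigma,m-1}$, i.e. $A_c(m)$ is bounded.

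The hard part is establishing the transfer identity in full for general $m$: both the exact evaluation of $D_{\lambda,m}$ as the weighted sum $\sum\binom{j}{m-1}|\gamma_j|^2$ and the verification that the intertwined operator is \emph{precisely} $A_c(m)$ and not some other triangular operator (the matching of the weights $\binom{j}{m-1}$ with the difference structure $c_{k+1}-c_k$ is the delicate point). A subsidiary technical issue is the well-definedness of $V_\lambda$ for a general $f\in\hat{\mathcal H}_{\lambda,m}$, since $\sum_{k>j}a_k\lambda^{k-j-1}$ need not converge absolutely; I would handle this by first proving the identity on polynomials and passing to the limit, or by reading $V_\lambda f$ off boundary data, using that $\binom{j}{m-1}\to\infty$ forces $\gamma_j\to 0$ and hence makes the inverse recursion $a_k=\gamma_{k-1}-\lambda\gamma_k$ consistent.
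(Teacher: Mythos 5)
Your proposal follows essentially the same route as the paper: the averaging formula $D_{\mu,m}(f)=\int_{\mathbb T}D_{\lambda,m}(f)\,d\mu(\lambda)$ reduces everything to point masses, and your transfer map $V_\lambda$ is exactly the paper's composition of $f\mapsto L_\lambda[f]$ (from the decomposition $f=a+(z-\lambda)L_\lambda[f]$ with $D_{\lambda,m}(f)=D_{\sigma,m-1}(L_\lambda[f])$, and with your $\gamma_j$ equal to the Taylor coefficients $b_j$ of $L_\lambda[f]$) with the diagonal unitary $b_j\mapsto\lambda^jb_j$, under which Hadamard multiplication by $c$ acquires precisely the matrix $T_c(m)$. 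The one point of difference is that the ``hard part'' you flag --- the exact evaluation $D_{\lambda,m}(f)=\sum_{j\geqslant m-1}\binom{j}{m-1}|\gamma_j|^2$ for general $m$ --- is not proved in the paper either but is quoted as the higher-order local Douglas formula \cite[Theorem 1.1]{GGR2}, so your argument is correct modulo that known result, and your proposed series-expansion proof of it is unnecessary.
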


\begin{proof}
$(iii) \implies (i) :$ Assume that the linear transformation $A_c(m)$ defines a bounded operator on the Hilbert space $\hat{\mathcal H}_{\sigma,m-1}.$ Fix a $\lambda\in \mathbb T$. 
 Let $f\in \mathcal  \mathcal H_{\lambda, m}$ and the associated power series representation of $f$ be given by 
$f(z)= \sum_{j=0}^{\infty}a_jz^j.
$
By \cite[Theorem 1.1]{GGR2}, $f(z)=a + (z-\lambda)L_{\lambda}[f](z)$, $z\in \mathbb D$, for some $a\in\mathbb C$ and $L_{\lambda}[f]\in\mathcal H_{\sigma,m-1}$ with  $D_{\lambda,m}(f)=D_{\sigma, m-1}(L_{\lambda}[f])$.  Writing $L_{\lambda}[f](z)= \sum_{j=0}^{\infty}b_jz^j$, we obtain the relations
\begin{align}\label{eqnHadmard1}
a_0=a-b_0\lambda,~~~a_k= b_{k-1}-b_k\lambda, ~~~ k \geqslant 1.
\end{align} 
Note that 
\begin{align*}
D_{\lambda,m}(f)= D_{\sigma,m-1}(L_{\lambda}[f]) = \sum_{j=m-1}^{\infty} |b_j|^2 \binom{j}{m-1} = D_{\sigma,m-1} \Big(\sum_{j=m-1}^{\infty} b_j z^j\Big).
\end{align*}
As $A_c(m)$ defines a bounded operator on $\hat{\mathcal H}_{\sigma,m-1}$ and $|\lambda|=1,$ the series $\sum_{k=j+1}^{\infty}(c_{k+1}-c_k)b_k \lambda^{k}$ converges absolutely for every $j\geqslant m$ (see the discussion preceding this theorem)  and it makes sense to consider the formal power series $g$ given by 
\begin{align*}
g(z)&= \sum\limits_{j=0}^{\infty} \Big(c_{j+1}b_j + \sum\limits_{k=j+1}^{\infty}(c_{k+1}-c_k)b_k \lambda^{k-j}\Big) z^j\\
&= \sum\limits_{j=0}^{\infty} \Big(c_{j+1}b_j\lambda^j + \sum\limits_{k=j+1}^{\infty}(c_{k+1}-c_k)b_k \lambda^{k}\Big) (\bar{\lambda}z)^j.
\end{align*}
Then by \eqref{eqnHadmard1}, it follows that $(c*f)(z)= A + (z-\lambda)g(z)$ for every $z\in \mathbb D$ for some $A\in\mathbb C$. Note that 
$$D_{\sigma, m-1}\big(\sum_{j=m-1}^{\infty} b_j \lambda^jz^j\big)=D_{\sigma, m-1}(\sum_{j=m-1}^{\infty} b_j z^j)=D_{\lambda, m}(f),$$ and 
\begin{align*}
 D_{\sigma,m-1}(g) &=  \sum_{j=m-1}^{\infty} \big| c_{j+1}b_j\lambda^j + \sum\limits_{k=j+1}^{\infty}(c_{k+1}-c_k)b_k \lambda^{k}\big|^2 \binom{j}{m-1}\\
&=  D_{\sigma,m-1}\Big( A_c(m)\big(\sum_{j=m-1}^{\infty} b_j \lambda^jz^j\big)\Big) < \infty.
\end{align*} 
Thus applying \cite[Theorem 1.1]{GGR2} a second time, we obtain $c*f\in \mathcal H_{\lambda, m}$, and 
\begin{eqnarray}\label{Rel-Had(c,f)-and-g}
D_{\lambda, m}(c*f)=D_{\sigma,m-1}(g).
\end{eqnarray}
Hence we obtain that 
\begin{equation}\label{equality norm}
\sup \Big\{ D_{\lambda,m}(c*f) : D_{\lambda,m}(f)=1 \Big\} = \|A_c(m)\|^2.
\end{equation}
This gives us that $D_{\lambda,m}(c*f) \leqslant \|A_c(m)\|^2 D_{\lambda,m}(f)$ for every $f\in H_{\lambda,m}.$ Since $\lambda\in \mathbb T$ was arbitrary, for any  $f\in \mathcal H_{\mu,m}$, it follows that  
\begin{align*}
D_{\mu,m}(c*f)& =\int_{\mathbb T} D_{\lambda,m}(c*f) d\mu(\lambda) \\ &\leqslant  \|A_c(m)\|^2 \int_{\mathbb T} D_{\lambda,m}(f) d\mu(\lambda)\\ &= \|A_c(m)\|^2 D_{\mu,m}(f).
\end{align*}
Hence we obtain that $c$ is a Hadamard multiplier of $\mathcal H_{\mu,m}$ for every $\mu\in \mathcal M_+(\mathbb T).$ 

$(i) \implies (ii)$ is obvious.

$(ii) \implies (iii) :$ Assume that $c$ is a Hadamard multiplier of $\mathcal H_{\lambda,m}$ for some $\lambda\in\mathbb T.$ Following Proposition \ref{Hadamard mult and seminorm inequality}, we obtain that the linear transformation $H_c$ on the Hilbert space $\hat{\mathcal H}_{\lambda,m}$ given by $H_c(f)=c*f,$ for $f\in \hat{\mathcal H}_{\lambda,m}$ is bounded. Moreover, there exists a constant $\kappa =\|H_c\|^2$ such that
\begin{align}\label{norm of Hc}
D_{\lambda,m}(c*f) \leqslant \kappa D_{\lambda,m}(f),~~~~~~~\mbox{for all~~~~}f\in \mathcal H_{\lambda,m}.
\end{align}

Note that, by \cite{GGR2},  every $f\in \mathcal H_{\lambda,m}$ can be uniquely written as $f(z)=a + (z-\lambda)L_{\lambda}[f](z)$, where $a\in \mathbb C$ and $D_{\lambda,m}(f)=D_{\sigma,m-1}(L_{\lambda}[f]).$ As $c$ is a Hadamard multiplier of $\mathcal H_{\lambda,m}$,  the inequality in \eqref{norm of Hc} can be rephrased as
\begin{align}\label{Revised inequality}
D_{\sigma,m-1}(L_{\lambda}[c*f]) \leqslant \kappa D_{\sigma,m-1}(L_{\lambda}[f]),~~~~~~~\mbox{for all~~~~}f\in \mathcal H_{\lambda,m}.
\end{align}
In view of the higher order local Douglas formula \cite{GGR2}, we have that for any $g\in \mathcal H_{\sigma,m-1},$ the function $(z-\lambda)g \in \mathcal H_{\lambda,m}$ and consequently $D_{\sigma,m-1}(L_{\lambda}[c*(z-\lambda)g]) < \infty$. Since for any $f\in\mathcal H_{\sigma,m-1},$ the function $f-s_{m-2}[f]\in \hat{\mathcal H}_{\sigma,m-1}$ whenever $m\geqslant 2.$ Now consider the transformation $\hat{H}_c(m)$ from the Hilbert space $\hat{\mathcal H}_{\sigma,m-1}$ into itself defined by
\begin{align*}
\hat{H}_{c}(m)(g):= \begin{cases}
L_{\lambda}[c*(z-\lambda)g] & \mbox{if~~} m=1,\\
L_{\lambda}[c*(z-\lambda)g] - s_{m-2}[L_{\lambda}[c*(z-\lambda)g]] & ~~~~\mbox{if~~} m \geqslant 2,
\end{cases}
~~~~\,\,\,\,\,\,\,g\in \hat{\mathcal H}_{\sigma,m-1}.
\end{align*}
In view of the inequality \eqref{Revised inequality}, we obtain that
\begin{align*}
D_{\sigma,m-1}\Big(\hat{H}_{c}(m)(g)\Big) &= D_{\sigma,m-1}(L_{\lambda}[c*(z-\lambda)g])\\
&\leqslant \kappa D_{\sigma,m-1}(L_{\lambda}[(z-\lambda)g])\\
&= \kappa D_{\sigma,m-1}(g),
\end{align*}
for every $g\in \hat{\mathcal H}_{\sigma,m-1}.$ Let's compute the matrix representation of $\hat{H}_{c}(m)$ with respect to the orthogonal basis $\{(\bar{\lambda}z)^j : j\geqslant m-1 \}$ of $\hat{\mathcal H}_{\sigma,m-1}.$ Note that for any $j\geqslant (m-1)$
\begin{align*}
L_{\lambda}\Big[ c* (z-\lambda)(\bar{\lambda}z)^j\Big]&= L_{\lambda}\Big[c_{j+1}\bar{\lambda}^j z^{j+1}- c_j\bar{\lambda}^{j-1}z^j\Big] \\
& = c_{j+1}\bar{\lambda}^j \frac{z^{j+1}-{\lambda}^{j+1}}{z-\lambda} - c_{j}\bar{\lambda}^{j-1} \frac{z^{j}-{\lambda}^{j}}{z-\lambda}\\
&=  (c_{j+1}-c_j)\sum_{k=0}^{j-1}(\bar{\lambda}z)^k + c_{j+1}(\bar{\lambda}z)^j
\end{align*}
Thus it follows that for any $j\geqslant (m-1),$ we have
\begin{align*}
\hat{H}_{c}(m)\Big((\bar{\lambda}z)^j\Big) = (c_{j+1}-c_j)\sum_{k=m-1}^{j-1}(\bar{\lambda}z)^k + c_{j+1}(\bar{\lambda}z)^j.
\end{align*}
Hence the matrix representation of the operator $\hat{H}_{c}(m)$ with respect to the orthogonal basis $\{(\bar{\lambda}z)^j : j\geqslant m-1 \}$ of $\hat{\mathcal H}_{\sigma,m-1}$ coincides with the matrix $T_c(m).$ Let $V$ be the unitary operator  on $\hat{\mathcal H}_{\sigma,m-1}$ defined by 
\begin{align*}
V\Big(\sum_{j=m-1}^{\infty}b_jz^j\Big)= \sum_{j=m-1}^{\infty}b_j {(\bar{\lambda} z)}^j
\end{align*}
Now it is straightforward to verify that $V^{-1}\hat{H}_{c}(m)V (z^j)= A_c(m)(z^j)$ for every $j\geqslant (m-1).$ Hence $A_c(m)$ must define a bounded operator on $\hat{\mathcal H}_{\sigma,m-1}$ and $\|A_c(m)\|= \|\hat{H}_{c}(m)\|.$
\end{proof}

\begin{rem}
It is straightforward to verify that any function $c\in \mathcal O(\mathbb D)$ with  Taylor coefficients $\{c_n\}$ is a Hadamard multiplier for $\mathcal H_{\sigma,m}$ if and only if $\{c_n\}$ is bounded. On the other hand, if we define $c_n=1$ if $n$ is odd and $c_n=0$ if $n$ is even, then the linear transformation $A_c(m)$ will be unbounded on $\hat{\mathcal H}_{\sigma, m-1}$, as noted in \cite[pg 52]{CesaroM} in the case $m=1.$
To see this for general $m$, note that for any even $n\geqslant m,$
\begin{eqnarray*}
\big\|A_c(m)(z^n)\big\|^2_{\sigma, m-1}= \sum_{k=m-1}^{n}{k\choose m-1}={n+1\choose m}.
\end{eqnarray*}
Therefore $\|A_c(m)\|\geqslant {n\choose m-1}^{-\frac{1}{2}}{n+1\choose m}^{\frac{1}{2}}\sim \sqrt{\frac{(n+1)^{m}}{n^{m-1}}}$ for all even  $n\geqslant m.$ Hence $A_c(m)$ is unbounded on $\hat{\mathcal H}_{\sigma, m-1}$.
\end{rem}
\begin{rem}\label{rem norm}
If $c$ is a Hadamard Multiplier of $\mathcal H_{\lambda,m}$ for some $\lambda\in\mathbb T$, then by Proposition \ref{Hadamard mult and seminorm inequality} the operator $H_c:\hat{\mathcal H}_{\lambda,m}\to \hat{\mathcal H}_{\lambda,m}$, defined by $H_c(f)=c*f$, is bounded. In view of \eqref{equality norm}, it follows that, in this case, $$\|H_c:\hat{\mathcal H}_{\lambda,m}\to \hat{\mathcal H}_{\lambda,m}\|=\|A_c(m)\|.$$
\end{rem}

\section{Generalized Ces{\`a}ro  summability in higher order Dirichlet spaces}
In this section, we provide proofs of Theorem \ref{mainthm1}  and Theorem \ref{mainthm2}. We shall need
the following property of the Gamma function  (see \cite[p. 257 (6.1.46)]{MS-64}):
\begin{equation}\label{stirling}
\lim_{k\to \infty} k^{b-a} \frac{\Gamma(k+a)}{\Gamma(k+b)}=1,~a,b\in\mathbb C.
\end{equation}
We start with the following lemma.
\begin{lemma}\label{apprx partial sum}
Let $\alpha\in \big(\frac{1}{2},1\big).$
Then there exists a positive constant $C$ such that
$$\sum_{j=0}^n\frac{\Gamma(j+\alpha)^2}{\Gamma(j+1)^2}\leqslant C (n+1)^{2\alpha-1}~~\mbox{for all}~n\geqslant 0.$$ 
\end{lemma}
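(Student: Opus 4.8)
The plan is to reduce the estimate to a standard integral comparison after replacing the gamma quotient by a power of $j$. First I would invoke the asymptotic relation \eqref{stirling} with $a=\alpha$ and $b=1$, which gives $j^{1-\alpha}\frac{\Gamma(j+\alpha)}{\Gamma(j+1)}\to 1$ as $j\to\infty$. Since a convergent sequence is bounded, there is a constant $M>0$ with $\frac{\Gamma(j+\alpha)}{\Gamma(j+1)}\leqslant M\,j^{\alpha-1}$ for every $j\geqslant 1$, and hence
\[
\frac{\Gamma(j+\alpha)^2}{\Gamma(j+1)^2}\leqslant M^2\, j^{2\alpha-2},\qquad j\geqslant 1.
\]

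Next I would observe that the hypothesis $\alpha\in(\tfrac12,1)$ forces $2\alpha-2\in(-1,0)$, so the function $x\mapsto x^{2\alpha-2}$ is positive and decreasing on $[1,\infty)$. Comparing the sum with the integral of this decreasing function then yields
\[
\sum_{j=1}^n j^{2\alpha-2}\leqslant 1+\int_1^n x^{2\alpha-2}\,dx = 1+\frac{n^{2\alpha-1}-1}{2\alpha-1}.
\]
Because $2\alpha-1>0$, we have $n^{2\alpha-1}\geqslant 1$ for $n\geqslant 1$, so the right-hand side is bounded by $\frac{2\alpha}{2\alpha-1}\,n^{2\alpha-1}$.

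Finally I would reinstate the $j=0$ term, which equals $\Gamma(\alpha)^2$, and pass from $n$ to $n+1$. Using $(n+1)^{2\alpha-1}\geqslant 1$ and $n^{2\alpha-1}\leqslant (n+1)^{2\alpha-1}$, the constant term and the sum can both be absorbed into a single constant $C$, giving the claimed inequality uniformly for all $n\geqslant 0$; the case $n=0$ is immediate since the sum is then just $\Gamma(\alpha)^2$.

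I do not expect a genuine obstacle here; the only point requiring care is the role of the hypothesis $\alpha>\tfrac12$. It is precisely this that makes the exponent $2\alpha-2$ exceed $-1$, so that the partial sums of $\sum_j j^{2\alpha-2}$ grow at the rate $n^{2\alpha-1}$ rather than staying bounded. Had $\alpha\leqslant\tfrac12$, the series would converge and the stated power of $n+1$ would be the wrong estimate. Keeping track of this dichotomy, and choosing the constant $M$ from \eqref{stirling} uniformly over $j\geqslant 1$, are the only things to watch.
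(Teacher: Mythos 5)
Your proof is correct and takes essentially the same approach as the paper: both use \eqref{stirling} to bound $\Gamma(j+\alpha)/\Gamma(j+1)$ by a constant multiple of a power with exponent $\alpha-1$, and then compare $\sum_j j^{2\alpha-2}$ with the integral of $t^{2\alpha-2}$, where $\alpha>\tfrac12$ ensures the exponent exceeds $-1$. The only cosmetic difference is that the paper uses the uniform bound $C_1(k+1)^{\alpha-1}$ valid for all $k\geqslant 0$ and compares with $\int_j^{j+1}t^{2\alpha-2}\,dt$ termwise, which avoids your separate handling of the $j=0$ term.
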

\begin{proof}
By \eqref{stirling}, there exists a constant $C_1>0$ such that $\frac{\Gamma(k+\alpha)}{\Gamma(k+1)}\leqslant C_1 (k+1)^{\alpha-1}$ for all $k\in \mathbb Z_{\geqslant 0}$.
Thus we have
\begin{align*}
\sum_{j=0}^n\frac{\Gamma(j+\alpha)^2}{\Gamma(j+1)^2}&\leqslant C_1^2\sum_{j=0}^n (j+1)^{2\alpha-2}\\
&\leqslant C_1^2\sum_{j=0}^n \int_j^{j+1} t^{2\alpha-2}~dt\\
&=C_1^2\int_{j=0}^{n+1} t^{2\alpha-2}~dt\\
&=\frac{C_1^2}{2\alpha-1}(n+1)^{2\alpha-1}.
\end{align*}
Choosing $C=\frac{C_1^2}{2\alpha-1}$ completes the proof of the lemma.
\end{proof}

\begin{theorem}\label{uniformly bounded M-n-alpha}
Let $\alpha\in \big(\frac{1}{2},1\big).$ For each $n\in\mathbb N,$ let $h_n(z)=\binom{n+\alpha}{\alpha}^{-1} \sum\limits_{k=0}^n \binom{n-k+\alpha}{\alpha} z^k$. Then the family of linear transformations
\begin{equation*}
A_{h_n}(m):\hat{\mathcal H}_{\sigma, m-1}\to \hat{\mathcal H}_{\sigma, m-1},~ n\in\mathbb N,
\end{equation*}
is uniformly bounded in operator norm.
\end{theorem}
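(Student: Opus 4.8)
The plan is to bound the operator norm of $A_{h_n}(m)$ directly and to show the bound is independent of $n$. Write $c_k=\binom{n+\alpha}{\alpha}^{-1}\binom{n-k+\alpha}{\alpha}$ for $0\le k\le n$ and $c_k=0$ for $k>n$, so that $h_n=\sum_k c_kz^k$ (the $c_k$ depend on $n$, which I suppress). Specializing \eqref{A_c(m)} to $z^k$ gives
\[
A_{h_n}(m)z^k=c_{k+1}z^k+(c_{k+1}-c_k)\sum_{j=m-1}^{k-1}z^j,
\]
so in the orthonormal basis $e_j=z^j/\sqrt{\binom{j}{m-1}}$ of $\hat{\mathcal H}_{\sigma,m-1}$ the operator splits as $A_{h_n}(m)=D_n+R_n$, with $D_n$ diagonal (entries $c_{k+1}$) and $R_n$ strictly upper triangular. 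First I would discard the diagonal part: since $\binom{n-k+\alpha}{\alpha}$ decreases in $k$, each $c_{k+1}$ lies in $[0,1]$, so $\|D_n\|\le1$ for all $n$.

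The crux is to bound $R_n$ uniformly, and I would do this via its Hilbert--Schmidt norm. From the display above, the entries of $R_n$ are $(c_{k+1}-c_k)\sqrt{\binom{j}{m-1}/\binom{k}{m-1}}$ for $m-1\le j<k\le n$, whence
\[
\|R_n\|_{\mathrm{HS}}^2=\sum_{k=m}^n(c_k-c_{k+1})^2\,\frac{1}{\binom{k}{m-1}}\sum_{j=m-1}^{k-1}\binom{j}{m-1}.
\]
Two elementary identities now enter. The hockey-stick identity $\sum_{j=m-1}^{k-1}\binom{j}{m-1}=\binom{k}{m}$ turns the inner factor into $\binom{k}{m}/\binom{k}{m-1}=(k-m+1)/m\le(n+1)/m$. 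The Pascal-type identity $\binom{p+\alpha}{\alpha}-\binom{p-1+\alpha}{\alpha}=\Gamma(p+\alpha)/(\Gamma(\alpha)\Gamma(p+1))$ gives $c_k-c_{k+1}=\binom{n+\alpha}{\alpha}^{-1}\Gamma(n-k+\alpha)/(\Gamma(\alpha)\Gamma(n-k+1))$ for all $m\le k\le n$, so after the substitution $i=n-k$ the squared differences become exactly $\binom{n+\alpha}{\alpha}^{-2}\Gamma(\alpha)^{-2}\,\Gamma(i+\alpha)^2/\Gamma(i+1)^2$, the summand controlled by Lemma \ref{apprx partial sum}.

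Putting these together I would obtain
\[
\|R_n\|_{\mathrm{HS}}^2\le\frac{n+1}{m\,\Gamma(\alpha)^2}\binom{n+\alpha}{\alpha}^{-2}\sum_{i=0}^{n}\frac{\Gamma(i+\alpha)^2}{\Gamma(i+1)^2}\le\frac{C}{m\,\Gamma(\alpha)^2}\binom{n+\alpha}{\alpha}^{-2}(n+1)^{2\alpha},
\]
the last step by Lemma \ref{apprx partial sum}. Finally \eqref{stirling} yields $\binom{n+\alpha}{\alpha}\sim(n+1)^{\alpha}/\Gamma(\alpha+1)$, so the right-hand side tends to $C\Gamma(\alpha+1)^2/(m\Gamma(\alpha)^2)=C\alpha^2/m$ and stays bounded in $n$. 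Hence $\sup_n\|A_{h_n}(m)\|\le1+\sup_n\|R_n\|_{\mathrm{HS}}<\infty$, which is the claim. The only delicate point---and the main obstacle---is precisely that $\|R_n\|_{\mathrm{HS}}$ does not diverge: the growth factor $(k-m+1)/m$ of size $O(n)$ must be absorbed by the decay $\binom{n+\alpha}{\alpha}^{-2}(n+1)^{2\alpha}=O(1)$ together with the summability $\sum_{i=0}^n\Gamma(i+\alpha)^2/\Gamma(i+1)^2=O((n+1)^{2\alpha-1})$ of Lemma \ref{apprx partial sum}, and it is exactly here that the hypothesis $\alpha>\tfrac12$ (equivalently $2\alpha-2>-1$) is used.
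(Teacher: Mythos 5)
Your proposal is correct and takes essentially the same approach as the paper's proof: the same matrix representation in the orthonormal basis $\big\{\tbinom{j}{m-1}^{-1/2}z^j\big\}_{j\geqslant m-1}$, the same split into a diagonal part of norm at most $1$ plus a strictly upper triangular part bounded by its Hilbert--Schmidt norm, the same hockey-stick identity and Gamma-function formula for $c_k-c_{k+1}$, and the same final appeal to Lemma \ref{apprx partial sum} and \eqref{stirling}. The only deviations are cosmetic (e.g.\ your bound $(k-m+1)/m\leqslant (n+1)/m$ in place of the paper's $(n-m+1)/m$, and summing over $i=0,\dots,n$ rather than $0,\dots,n-m$), so nothing further is needed.
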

\begin{proof}
Note that $\big\{ {j\choose m-1}^{-\frac{1}{2}} z^{j}\big\}_{j\geqslant m-1}$ forms an orthonormal basis of $\hat{\mathcal H}_{\sigma, m-1}.$ Fix $n\in\mathbb N.$ 
With respect to this orthonormal basis, let $(\!(a_{i,j} )\!)_{i,j=m-1}^\infty$ denote the matrix of  $A_{h_n}(m).$
From \eqref{A_c(m)}, it is easy to see that  
\begin{equation*}\label{the matrix aij}
a_{i,j}=\begin{cases}
c_{j+1},& \mbox{if~}  i=j\\
\sqrt{\frac{{i \choose m-1}}{{j \choose m-1}}}(c_{j+1}-c_{j}),& \mbox{if~} i+1\leqslant j\\
0,& \mbox{otherwise},
\end{cases}
\end{equation*}
where $c_j=\binom{n+\alpha}{\alpha}^{-1} \binom{n-j+\alpha}{\alpha}$ for $j\leqslant n$ and $c_j=0$ for $j>n.$
Note that all the entries of the matrix $(\!(a_{i,j} )\!)_{i,j=m-1}^\infty$ are zero except finitely many. 
Thus we have 
\begin{align*}
\|A_{h_n}(m)\|
&=\|(\!( a_{i,j} )\!)_{i,j=m-1}^n\| \nonumber\\
& \leqslant
\left \|\begin{pmatrix}
a_{m-1,m-1}& 0 & \hdots  & 0\\
0& a_{m,m}& \hdots & 0\\
\vdots &\vdots&\ddots &0\\
0&0&\hdots & a_{n,n}
\end{pmatrix}\right \|+ \left
\|\begin{pmatrix}
0& a_{m-1,m} & \hdots  & a_{m-1,n}\nonumber\\
0& 0& \hdots & a_{m,n}\\
\vdots &\vdots&\ddots &a_{n-1,n}\nonumber\\
0&0&\hdots & 0
\end{pmatrix}\right \| \nonumber\\
& \leqslant \max_{m-1\leqslant i\leqslant n} |a_{i,i} |+\sum_{j=m}^n \sum_{i=m-1}^{j} |a_{i,j}|^2\nonumber\\
& \leqslant 1 + \sum_{j=m}^n \sum_{i=m-1}^{j-1} |a_{i,j}|^2.\label{eqn2}
\end{align*}
Here for the second last inequality, we have used the fact that the operator norm of a matrix is bounded by its Hilbert- Schmidt norm.
It is easily verified that for $0\leqslant j\leqslant n$
\begin{align*}
c_{j+1}-c_j&=\frac{1}{{n+\alpha\choose \alpha}}\left({n-j-1+\alpha \choose \alpha}-{n-j+\alpha \choose \alpha}\right)\\
&=-\frac{\alpha~\Gamma(n-j+\alpha)}{{n+\alpha\choose \alpha}\Gamma(\alpha+1) \Gamma (n-j+1)}.
\end{align*}
Note that
\begin{align*}
\sum_{j=m}^n \sum_{i=m-1}^{j-1} |a_{i,j}|^2&
\leqslant \frac{\alpha^2}{(\Gamma(\alpha+1))^2{n+\alpha \choose \alpha}^2} \sum_{j=m}^n \sum_{i=m-1}^{j-1} \frac{{i\choose m-1}}{{j \choose m-1}} \frac{(\Gamma(n-j+\alpha))^2}{(\Gamma(n-j+1))^2}\nonumber\\
&
= \frac{\alpha^2}{(\Gamma(\alpha+1))^2{n+\alpha \choose \alpha}^2} \sum_{j=m}^n \frac{1}{{j \choose m-1}}\frac{(\Gamma(n-j+\alpha))^2}{(\Gamma(n-j+1))^2}\sum_{i=m-1}^{j-1}  {i\choose m-1}\nonumber\\&
=\frac{\alpha^2}{(\Gamma(\alpha+1))^2{n+\alpha \choose \alpha}^2} \sum_{j=m}^n \frac{1}{{j \choose m-1}}\frac{(\Gamma(n-j+\alpha))^2}{(\Gamma(n-j+1))^2} {j\choose m}\nonumber\\
&= \frac{\alpha^2}{m(\Gamma(\alpha+1))^2{n+\alpha \choose \alpha}^2} \sum_{j=m}^n (j-m+1)\frac{(\Gamma(n-j+\alpha))^2}{(\Gamma(n-j+1))^2}\nonumber\\&
\leqslant \frac{\alpha^2(n-m+1)}{m(\Gamma(\alpha+1))^2{n+\alpha \choose \alpha}^2}  \sum_{j=m}^n \frac{(\Gamma(n-j+\alpha))^2}{(\Gamma(n-j+1))^2}\nonumber\\
&=\frac{\alpha^2(n-m+1)}{m(\Gamma(\alpha+1))^2{n+\alpha \choose \alpha}^2}  \sum_{j=0}^{n-m}\frac{(\Gamma(j+\alpha))^2}{(\Gamma(j+1))^2}\\
& \leqslant C \frac{\alpha^2}{m(\Gamma(\alpha+1))^2{n+\alpha \choose \alpha}^2} (n-m+1)^{2\alpha}.
\end{align*}
Here, we have used a well-known binomial identity $\sum_{i=m-1}^{j-1}  {i\choose m-1}= {j\choose m},$ (see \cite[p. 46]{DW}), while the  last inequality follows from Lemma \ref{apprx partial sum}. 
 Also, by \eqref{stirling}, we get ${n+\alpha \choose \alpha}\sim (n+1)^{\alpha}$. 
 This completes the proof.
\end{proof}


The following lemma might be well-known to the experts. We provide a proof for the sake of completeness. 
\begin{lemma}\label{LemmaGMR}
Let $\{T_n\}_{n\geqslant 1}$ be a sequence of bounded linear operators on a reproducing kernel Hilbert space $\mathcal H$ of holomorphic functions on $\mathbb D$. Suppose that
\begin{enumerate}
\item[\rm (i)] $T_n$ is finite-rank for each $n\in\mathbb N$
\item[\rm (ii)] $T_nT_m(\mathcal H)\subseteq T_m(\mathcal H)$ for each $m,n\in \mathbb N$
\item[\rm (iii)] $T_n(f)(z)\to f(z)$ as $n\to \infty$ for all $f\in\mathcal H$ and for all $z\in \mathbb D.$ 
\end{enumerate}
 Then $T_n(f)\to f$ in norm as $n\to \infty$ for all $f\in\mathcal H$ if and only if $\sup_{n}\|T_n\| <\infty.$
\end{lemma}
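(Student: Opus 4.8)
The plan is to dispatch the two implications separately. The forward implication is routine: if $T_n(f)\to f$ in norm for every $f\in\mathcal H$, then each sequence $\{T_n(f)\}_n$ converges and is therefore bounded, so $\sup_n\|T_n(f)\|<\infty$ for every $f$, and the uniform boundedness principle yields $\sup_n\|T_n\|<\infty$. All the work is in the converse, so from now on I assume $M:=\sup_n\|T_n\|<\infty$, and I write $V_m:=T_m(\mathcal H)$, which is finite-dimensional by hypothesis (i), and denote by $k_z$ the reproducing kernel of $\mathcal H$ at the point $z\in\mathbb D$.

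The first step is a weak-convergence observation that uses only the bound $M$ together with hypothesis (iii). For an arbitrary $g\in\mathcal H$ the sequence $\{T_n(g)\}$ is bounded (by $M\|g\|$), and by (iii) we have $\langle T_n(g),k_z\rangle=T_n(g)(z)\to g(z)=\langle g,k_z\rangle$ for every $z\in\mathbb D$. Since the kernel functions $\{k_z:z\in\mathbb D\}$ have dense linear span in $\mathcal H$, a bounded sequence that converges against every $k_z$ converges weakly; hence $T_n(g)\rightharpoonup g$ for every $g\in\mathcal H$. I then upgrade this to norm convergence on each range $V_m$, and it is here that hypotheses (i) and (ii) are used: fixing $g\in V_m$, condition (ii) guarantees $T_n(g)\in V_m$ for all $n$, so the sequence $\{T_n(g)\}$ together with its limit lives in the finite-dimensional subspace $V_m$, where weak convergence is the same as norm convergence. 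Thus $\|T_n(g)-g\|\to 0$ for every $g\in\bigcup_m V_m$.

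The crucial third step, which I expect to be the main obstacle, is to show that $\bigcup_m V_m$ is dense in $\mathcal H$; this is the only place where the pointwise hypothesis interacts nontrivially with the Hilbert-space geometry. I would argue by orthogonal complements: suppose $f\perp V_m$ for every $m$. Then $\langle f,T_m(k_z)\rangle=0$ for all $z\in\mathbb D$ and all $m$ (since $T_m(k_z)\in V_m$). Applying the weak-convergence observation of the first step to $g=k_z$ gives $T_m(k_z)\rightharpoonup k_z$ as $m\to\infty$, whence $\langle f,T_m(k_z)\rangle\to\langle f,k_z\rangle=f(z)$. Comparing the two computations forces $f(z)=0$ for every $z\in\mathbb D$, so $f=0$; therefore $\bigl(\bigcup_m V_m\bigr)^\perp=\{0\}$ and $\bigcup_m V_m$ is dense.

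Finally I would assemble the pieces by the standard three-term estimate. Given $f\in\mathcal H$ and $\varepsilon>0$, I choose $g\in\bigcup_m V_m$ with $\|f-g\|<\varepsilon/(2(M+1))$ using density, and then $N$ so large that $\|T_n(g)-g\|<\varepsilon/2$ for $n\geqslant N$ using the second step. The bound $\|T_n(f)-f\|\leqslant\|T_n\|\,\|f-g\|+\|T_n(g)-g\|+\|g-f\|\leqslant(M+1)\|f-g\|+\|T_n(g)-g\|$ then gives $\|T_n(f)-f\|<\varepsilon$ for all $n\geqslant N$, which completes the proof of the converse.
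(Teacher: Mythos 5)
Your proof is correct. The forward direction via the uniform boundedness principle and the first step of your converse --- deducing $T_n(g)\rightharpoonup g$ from the uniform bound together with pointwise convergence tested against the dense span of the kernels $k_z$ --- coincide exactly with the paper's argument. Where you diverge is at the finish: the paper stops at weak convergence and invokes an external result (Lemma 2.3 of the cited Ghara--Mashreghi--Ransford paper \emph{Summability and duality}) to upgrade it to norm convergence, so within the paper itself hypotheses (i) and (ii) are never touched directly; they are consumed by the citation. You instead open that black box and supply a self-contained argument: (ii) confines $\{T_n(g)\}$ to the finite-dimensional (hence closed) range $V_m=T_m(\mathcal H)$ whenever $g\in V_m$, where weak and norm convergence agree by (i); the density of $\bigcup_m V_m$ follows from a clean orthogonality argument, reusing the weak convergence $T_m(k_z)\rightharpoonup k_z$ to force any $f\perp\bigcup_m V_m$ to vanish pointwise; and the standard three-term estimate with the uniform bound $M$ transfers norm convergence from the dense set $\bigcup_m V_m$ to all of $\mathcal H$. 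Each of these steps is sound (in particular, the weak limit $g$ lies in $V_m$, so the finite-dimensional reduction is legitimate, and $\langle f,T_m(k_z)\rangle\to\langle f,k_z\rangle=f(z)$ is a correct use of the reproducing property). What your route buys is independence from the cited lemma and full transparency about the role of each hypothesis; what the paper's route buys is brevity. Your argument is presumably close in spirit to the proof of the cited lemma itself, but as a replacement for the paper's proof it is complete as written.
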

\begin{proof}
Suppose  $T_n(f)\to f$ in norm as $n\to \infty$ for all $f\in\mathcal H$. Then, by the uniform boundedness principle, it follows that $\sup_{n}\|T_n\| <\infty.$ For the converse, assume that $\sup_{n}\|T_n\| <\infty.$ Let $K(z,w)$
denote the reproducing kernel of $\mathcal H$. Since $T_n(f)(z)\to f(z)$ as $n\to \infty$ for all $z\in \mathbb D$, by the reproducing property of $\mathcal H$, it follows that 
$\langle T_n(f),g\rangle \to \langle f, g\rangle$ for all $g$ of the form $\sum_{i=1}^\ell a_i K(\cdot, w_i)$, where $a_i\in\mathbb{C}, w_i\in\mathbb D, \ell\in\mathbb N.$ Since the set $\{\sum_{i=1}^\ell a_i K(\cdot, z_i): a_i\in\mathbb{C}, z_i\in\mathbb D, \ell\in\mathbb N\}$ is dense in $\mathcal H$, and $\sup_n\|T_n\|<\infty$, it follows that $T_n(f)\to f$ weakly as  $n\to \infty$ for all $f\in \mathcal H.$ The proof is now complete by \cite[Lemma 2.3]{GMR}.
\end{proof}

We now are ready to prove main theorem of this section. \\

\noindent \textbf{Proof of Theorem \ref{mainthm1}} 
In view of \cite[Theorem 43]{Hardy}, it is sufficient to consider $\frac{1}{2} <\alpha <1.$ 
Suppose $f\in\mathcal H_{\mu,m}.$
Note that $\sigma_n^{\alpha}[f](z)=(h_n*f)(z),$ where $h_n(z)=\binom{n+\alpha}{\alpha}^{-1} \sum_{k=0}^n \binom{n-k+\alpha}{\alpha} z^k$,  $n\in \mathbb N$.
By Theorem \ref{uniformly bounded M-n-alpha}, there exists a constant 
$\kappa>0$ such that $\|A_{h_n}(m)\| \leqslant \kappa$ for all $n\in\mathbb N.$ Using this along with Theorem 
\ref{Hadamard}, we get
\begin{eqnarray*}
D_{\mu,m}(\sigma_n^{\alpha}[f])=D_{\mu,m}(h_n*f) \leqslant \|A_{h_n}(m)\|^2 D_{\mu,m}(f)\leqslant \kappa^2 D_{\mu,m}(f).
\end{eqnarray*}
To prove the second part, let $T_n$ be the operator on $(\hat{\mathcal H}_{\mu,m},D_{\mu,m}(\cdot))$ defined by $T_n(f)=\sigma_n^{\alpha}[f].$ It is easy to see that $T_n$ is finite-rank and $T_nT_{\ell}(\hat{\mathcal H}_{\mu,m})\subseteq T_{\ell}(\hat{\mathcal H}_{\mu,m})$ for each $n,\ell$. Also, since for any $f\in\mathcal O(\mathbb D)$, $s_n[f](z)\to f(z)$ for each $z\in \mathbb D$, by 
\cite[Theorem 4.3]{Hardy}, we obtain that $T_n(f)(z)\to f(z)$ for each $z\in\mathbb D$. Further, by the first part of this Theorem, $\sup_n\|T_n\|<\infty.$ Hence by Lemma \ref{LemmaGMR}, it follows that $D_{\mu,m}(\sigma_n^\alpha[f]-f)\to 0$ as $n\to \infty$ for all $f\in \hat{\mathcal H}_{\mu,m}$. Now let $f=\sum_{i\geqslant 0} a_iz^i\in {\mathcal H}_{\mu,m}.$ Then $f_1:=\sum_{i\geqslant m} a_iz^i\in \hat{\mathcal H}_{\mu,m}$ and $D_{\mu,m}(\sigma_n^\alpha[f]-f)=D_{\mu,m}(\sigma_n^\alpha[f_1]-f_1)\to 0$ as $n\to\infty$. This completes the proof.
\vspace{.2cm}

\noindent \textbf{Proof of Corollary \ref{maincor}~}  
Let $\alpha> \frac{1}{2}.$  It is easy to see that $\|s_n[f]\to f\|_{H^2}\to 0$ for all $f\in H^2$. Thus by  \cite[Theorem 4.3]{Hardy}, $\|\sigma_n^{\alpha}[f]-f\|_{H^2}\to 0$ for all $f\in H^2$. This together with  Theorem \ref{mainthm1} completes the proof of this Corollary.


%
%
%
We proceed now to show that the statement of Theorem \ref{uniformly bounded M-n-alpha} does not hold true if $\alpha\leqslant 1/2.$ It is enough to disprove the statement for $\alpha=1/2.$

\begin{prop}\label{Propo3.4}
Suppose for each $n\in\mathbb N,$ $g_n(z)=\sum\limits_{k=0}^n (1-\frac{k}{n+1})^{1/2} z^k.$ Then for any $m\in\mathbb N,$ the family $\{A_{g_n}(m): n\in\mathbb N\}$ is not uniformly bounded.
\end{prop}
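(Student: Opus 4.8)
The plan is to show that $\|A_{g_n}(m)\|\to\infty$ as $n\to\infty$, which forces non-uniform boundedness. I would do this by exhibiting, for each fixed $m$, a sequence of test vectors $v_n\in\hat{\mathcal H}_{\sigma,m-1}$ with $\|A_{g_n}(m)v_n\|_{\sigma,m-1}/\|v_n\|_{\sigma,m-1}\to\infty$. Write $c_k=(1-\tfrac{k}{n+1})^{1/2}=\big(\tfrac{n+1-k}{n+1}\big)^{1/2}$ for $0\le k\le n$ and $c_k=0$ for $k>n$, and set $d_k:=c_{k+1}-c_k$. Recalling from \eqref{A_c(m)} that the $z^j$-coefficient of $A_{g_n}(m)\big(\sum_k b_kz^k\big)$ equals $c_{j+1}b_j+\sum_{k=j+1}^n d_k b_k$, the idea is to take $b_k=d_k$, i.e.\ $v_n:=\sum_{k=m-1}^n d_k z^k$. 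With this choice the off-diagonal contributions add constructively: the $z^j$-coefficient becomes $c_{j+1}d_j+\sum_{k=j+1}^n d_k^2$, whose dominant part $e_j:=\sum_{k=j+1}^n d_k^2$ is positive (all $d_k<0$).

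The first technical step is a two-sided estimate for $d_k$. Rationalizing,
\[
d_k=\Big(\tfrac{n-k}{n+1}\Big)^{1/2}-\Big(\tfrac{n+1-k}{n+1}\Big)^{1/2}
=\frac{-1/(n+1)}{\big(\tfrac{n-k}{n+1}\big)^{1/2}+\big(\tfrac{n+1-k}{n+1}\big)^{1/2}},
\]
whence $\tfrac{1}{4(n+1)(n+1-k)}\le d_k^2\le \tfrac{1}{(n+1)(n+1-k)}$ for $m-1\le k\le n$. In particular $e_j=\sum_{k=j+1}^n d_k^2\asymp \tfrac1{n+1}\sum_{\ell=1}^{n-j}\tfrac1\ell$, a harmonic sum; this is precisely where the logarithmic growth responsible for the breakdown at $\alpha=\tfrac12$ enters.

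Next I would bound $\|A_{g_n}(m)v_n\|_{\sigma,m-1}$ from below using only the low-frequency coefficients $m-1\le j\le \lfloor n/2\rfloor$. For such $j$ one has $e_j\ge \tfrac{1}{4(n+1)}\sum_{\ell=1}^{\lceil n/2\rceil}\tfrac1\ell\gtrsim \tfrac{\ln n}{n+1}$, while the diagonal error obeys $|c_{j+1}d_j|\le|d_j|\lesssim \tfrac1n$; hence $|c_{j+1}d_j|/e_j\lesssim 1/\ln n\to 0$, so $c_{j+1}d_j+e_j\ge \tfrac12 e_j$ for all large $n$. Combining this with the hockey-stick identity $\sum_{j=m-1}^{J}\binom{j}{m-1}=\binom{J+1}{m}$ (with $J=\lfloor n/2\rfloor$) gives
\[
\|A_{g_n}(m)v_n\|_{\sigma,m-1}^2\ \ge\ \sum_{j=m-1}^{\lfloor n/2\rfloor}\tfrac14\, e_j^2\binom{j}{m-1}\ \gtrsim\ \frac{(\ln n)^2}{(n+1)^2}\binom{\lfloor n/2\rfloor+1}{m}\ \asymp\ (\ln n)^2\, n^{m-2}.
\]

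Finally I would bound $\|v_n\|_{\sigma,m-1}^2=\sum_{k=m-1}^n d_k^2\binom{k}{m-1}$ from above: using $\binom{k}{m-1}\le\binom{n}{m-1}\asymp n^{m-1}$ together with the same harmonic estimate, $\|v_n\|_{\sigma,m-1}^2\lesssim \tfrac{n^{m-1}}{n+1}\sum_{\ell=1}^{n}\tfrac1\ell\asymp n^{m-2}\ln n$. Dividing the two displays, $\|A_{g_n}(m)\|^2\ge \|A_{g_n}(m)v_n\|_{\sigma,m-1}^2/\|v_n\|_{\sigma,m-1}^2\gtrsim \ln n\to\infty$, which proves the claim for every $m$. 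I expect the main obstacle to be purely the uniform-in-$m$ bookkeeping of the mixed harmonic/binomial sums; once the estimate $d_k^2\asymp \tfrac{1}{(n+1)(n+1-k)}$ and the restriction to $j\le n/2$ are in hand, the logarithmic divergence is forced and the diagonal term is seen to be negligible.
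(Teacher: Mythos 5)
Your proof is correct, and at its core it runs on the same mechanism as the paper's: the tail $\ell^2$-mass of the difference sequence $d_k=c_{k+1}-c_k$, which behaves like $\sum_{k\geqslant s}d_k^2\asymp \frac{\log(n-s+2)}{n+1}$, is replicated in every row $j\leqslant \lfloor n/2\rfloor$ of the matrix of $A_{g_n}(m)$ and then amplified by the weights $\binom{j}{m-1}$ through the hockey-stick identity $\sum_{j=m-1}^{J}\binom{j}{m-1}=\binom{J+1}{m}$, yielding the $\sqrt{\log n}$ lower bound in both arguments. The difference is in the implementation. The paper conjugates $T_{g_n}(m)$ by the diagonal weight matrix $D$, extracts the rank-one block $A$ with identical rows $(c_{s+1}-c_s,\ldots,c_{n+1}-c_n)$, $s=\lfloor n/2\rfloor$, uses $\|PAQ\|\geqslant \alpha\beta\|A\|$ with $\|A\|=\sqrt{s+1}\big(\sum_{k=s}^{n}d_k^2\big)^{1/2}$, and quotes \cite[Theorem 2.2 and p.~7]{CesaroM} for the harmonic-sum estimate; your test vector $v_n=\sum_{k=m-1}^{n}d_kz^k$ is exactly the maximizing vector for that rank-one block, so you compute the same quantity directly from the coefficient formula \eqref{A_c(m)}. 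What your route buys is self-containedness: you derive the two-sided bound $d_k^2\asymp \big((n+1)(n+1-k)\big)^{-1}$ and the logarithmic divergence from scratch instead of citing \cite{CesaroM}, and your vector works on the full matrix rather than a sub-block. The price is that you must control the diagonal contribution $c_{j+1}d_j$, which the paper's strictly off-diagonal block sidesteps; your estimate $|c_{j+1}d_j|\lesssim 1/n=o(e_j)$ for $j\leqslant \lfloor n/2\rfloor$, where $e_j\gtrsim \frac{\log n}{n+1}$, handles this correctly, so that $(c_{j+1}d_j+e_j)^2\geqslant \frac14 e_j^2$ for large $n$ and the ratio $\|A_{g_n}(m)v_n\|^2/\|v_n\|^2\gtrsim \log n$ follows. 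One cosmetic remark: since $m$ is fixed throughout, no uniformity in $m$ is required, so the ``uniform-in-$m$ bookkeeping'' you worry about is not actually an obstacle --- all implied constants may depend on $m$.
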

\begin{proof}
Fix $n\in\mathbb N.$ Let $c_k=(1-\frac{k}{n+1})^{1/2}, \ k=1,\ldots, n.$
Consider the sub-matrix $P A Q$ defined below of $D T_{g_n}(m)D^{-1},$ where
\[
A :=
\begin{pmatrix}
c_{s+1} - c_s & \ldots & c_{n+1} - c_n \\
\vdots & \ddots & \vdots \\
c_{s+1} - c_s & \ldots & c_{n+1} - c_n
\end{pmatrix}_{(s+1)\times (n-s+1)},
\]
 $$P=\mbox{diag}\bigg({{s\choose m-1}^\frac{1}{2}}, {{s+1\choose m-1}^\frac{1}{2}},\ldots, {{2s\choose m-1}^\frac{1}{2}}\bigg),$$ 
 $$Q=\mbox{diag}\bigg({s\choose m-1}^{-\frac{1}{2}}, {s+1\choose m-1}^{-\frac{1}{2}},\ldots, {n\choose m-1}^{-\frac{1}{2}}\bigg),$$ and 
 $$D=\mbox{diag}\bigg({{m-1\choose m-1}^\frac{1}{2}}, {{m\choose m-1}^\frac{1}{2}},\ldots\bigg).$$ 
 Note that 
\[\|PAQ\|\geqslant \alpha \beta \|A\|,\]
where $\alpha$ and $\beta$ are minimum of the eigenvalues of $P$ and $Q$ respectively, and $\|\cdot\|$ denotes the standard operator norm.
Using the argument as presented in \cite[Theorem 2.2]{CesaroM}, this in turn implies that
\begin{eqnarray}\label{lower bound on operator norm}
\|PAQ\|\geqslant \sqrt{\frac{{s\choose m-1}}{{n\choose m-1}}} \sqrt{s+1}\big(\sum_{k=s}^n |c_{k+1} - c_k|^2)^{1/2}
\end{eqnarray}
Since ${s\choose m-1}\sim (s+1)^{m-1}$ and ${n\choose m-1}\sim (n+1)^{m-1}$, it follows from \eqref{lower bound on operator norm} that
\begin{eqnarray*}
\|PAQ\|\geqslant \sqrt{\frac{(s+1)^m}{(n+1)^{m-1}}}\big(\sum_{k=s}^n |c_{k+1} - c_k|^2)^{1/2}.
\end{eqnarray*}
 It follows from \cite[page 7]{CesaroM} that
\[\|PAQ\|\geqslant \sqrt{\frac{(s+1)^m}{(n+1)^{m-1}}}\frac{1}{2\sqrt{n+1}}\sqrt{log(n+2-s)}=\frac{1}{2}\sqrt{\frac{(s+1)^m}{(n+1)^{m}}}\sqrt{log(n+2-s)}.\]
Choosing $s=\lfloor{\frac{n}{2}}\rfloor$, we get that 
\begin{eqnarray}\label{log-lower-bound}
\|PAQ\|\geqslant \frac{1}{2^{m+1}}\sqrt{log(2+n/2)}.
\end{eqnarray}
Now as $n$ approaches to $\infty$, the right hand side of \eqref{log-lower-bound} tends to $\infty$ as well.
This proves that the family $\{T_{g_n}(m):n\in\mathbb N\}$ is not uniformly bounded.
\end{proof}

\textbf{Proof of Theorem \ref{mainthm2}:} 
Let $\alpha=\frac{1}{2}$. Note that $\sigma_n^{\alpha}[f](z)=(h_n*f)(z),$ where $h_n(z)=\binom{n+\alpha}{\alpha}^{-1} \sum_{k=0}^n \binom{n-k+\alpha}{\alpha} z^k$,  $n\in \mathbb N$. By \cite[Theorem 3.2]{CesaroM}, for any $f\in\mathcal H_{\lambda, m}$, the sequence $\{h_n* f\}$ does not converge to $f$ in $\mathcal H_{\lambda, m}$ if and only if the sequence $\{{\phi_n* f}\}$ does not converge to $f$ in $\mathcal H_{\lambda, m}$, where for each $n\in\mathbb N,$ $\phi_n(z)=\sum_{k=1}^{n}c_kz^k$ with $c_k=(1-\frac{k}{n+1})^{1/2}, \ k=1,\ldots, n.$  By Proposition \ref{Propo3.4}, the family $\{A_{\phi_n}(m)\}_{n\in\mathbb N}$ is not uniformly bounded. Thus, by Remark \ref{rem norm}, it follows that
the family $\{H_{\phi_n}\}_{n\in\mathbb N}$, where  $H_{\phi_n}:\hat{\mathcal H}_{\lambda,m}\to \hat{\mathcal H}_{\lambda,m}$ is given by $H_{\phi_n}(f)=\phi_n*f$, is not uniformly bounded. An application of the uniform boundedness principle now completes the proof.

\section{Convergence of Generalized Ces{\`a}ro sum}
In this section, we provide an alternative proof of Theorem \ref{mainthm1} using the method of induction. For any $f\in\mathcal O(\mathbb D),$ let $Lf$ be the function in $\mathcal O(\mathbb D)$ defined by
\begin{align*}
Lf(z)= \frac{f(z)-f(0)}{z},\,\,\,z\in\mathbb D.
\end{align*}
It is known that for any $\mu\in \mathcal M_+(\mathbb T)$ and $m\in\mathbb N,$ the inequality $D_{\mu,m}(Lf) \leqslant D_{\mu,m}(f)$ holds for every $f\in\mathcal H_{\mu,m},$ see \cite[Lemma 2.9]{GGR}. Moreover, from \cite[Lemma 2.10]{GGR} and \cite[Corollary 3.3]{GGR}, we have the following result which will be crucial for the proof of Theorem \ref{mainthm1} presented in this section. 
\begin{lemma}\label{backward shift identity}
Let $m\geqslant 1$ and $\mu\in \mathcal M_+(\mathbb T).$ Then for any function $f$ in $\mathcal H_{\mu,m}$, we have
\begin{eqnarray*}
\sum\limits_{k=1}^{\infty}D_{\mu,j}(L^kf)=D_{\mu,j+1}(f), \quad 0 \leqslant j\leqslant m-1.
\end{eqnarray*}
\end{lemma}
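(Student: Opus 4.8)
The plan is to reduce the stated identity, which relates the Dirichlet integrals of orders $j$ and $j+1$, to a single ``one-step'' recursion and then to iterate it, exactly as the monomial case suggests. As a guide, consider $\mu=\sigma$, where $D_{\sigma,i}(g)=\sum_{k\geqslant i}\binom{k}{i}|b_k|^2$ for $g=\sum_k b_kz^k$. Writing $f=\sum_n a_nz^n$, one has $L^kf=\sum_{i\geqslant 0}a_{i+k}z^i$, so $D_{\sigma,j}(L^kf)=\sum_{n\geqslant j+k}\binom{n-k}{j}|a_n|^2$; summing over $k\geqslant 1$, interchanging the two nonnegative sums, and applying the hockey-stick identity $\sum_{\ell=j}^{n-1}\binom{\ell}{j}=\binom{n}{j+1}$ yields $\sum_{k\geqslant1}D_{\sigma,j}(L^kf)=\sum_n\binom{n}{j+1}|a_n|^2=D_{\sigma,j+1}(f)$. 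This both confirms the identity in the model case and isolates the mechanism behind it: a telescoping driven by Pascal's rule.

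First I would record the corresponding one-step identity for general $\mu$, namely
\[
D_{\mu,j+1}(f)=D_{\mu,j}(Lf)+D_{\mu,j+1}(Lf),\qquad 0\leqslant j\leqslant m-1,
\]
valid for every $f\in\mathcal H_{\mu,m}$; this is the content of \cite[Lemma 2.10]{GGR} (in the case $\mu=\sigma$ it reduces to $\binom{n-1}{j}+\binom{n-1}{j+1}=\binom{n}{j+1}$). Replacing $f$ by $Lf$ repeatedly and substituting, an immediate induction on $N$ gives the finite telescoped identity
\[
D_{\mu,j+1}(f)=\sum_{k=1}^{N}D_{\mu,j}(L^kf)+D_{\mu,j+1}(L^Nf),\qquad N\in\mathbb N.
\]
Since every term $D_{\mu,j}(L^kf)$ is nonnegative, the partial sums on the right are nondecreasing and bounded above by $D_{\mu,j+1}(f)$; hence the series $\sum_{k\geqslant1}D_{\mu,j}(L^kf)$ converges and $D_{\mu,j+1}(L^Nf)$ tends to a nonnegative limit as $N\to\infty$.

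The hard part will be to show that this limit is exactly $0$, which is where \cite[Corollary 3.3]{GGR} enters. Because $j+1\leqslant m$, the function $f$ lies in $\mathcal H_{\mu,j+1}$, so $D_{\mu,j+1}(f)<\infty$, and the remainder $D_{\mu,j+1}(L^Nf)$ then behaves like a tail of a convergent quantity and must vanish. Concretely, in the model case $D_{\sigma,j+1}(L^Nf)=\sum_{n\geqslant j+1+N}\binom{n-N}{j+1}|a_n|^2\leqslant\sum_{n\geqslant j+1+N}\binom{n}{j+1}|a_n|^2$, which is the tail of the convergent series defining $D_{\sigma,j+1}(f)$ and hence tends to $0$; the general-$\mu$ statement follows in the same way once the tail control provided by \cite[Corollary 3.3]{GGR} is invoked. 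Letting $N\to\infty$ in the telescoped identity then gives $\sum_{k\geqslant1}D_{\mu,j}(L^kf)=D_{\mu,j+1}(f)$, completing the proof. The only genuine subtlety is justifying the vanishing of the remainder for an arbitrary measure rather than for $\sigma$ alone; everything else is a formal manipulation of nonnegative series.
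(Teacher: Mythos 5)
Your argument is correct and follows essentially the same route as the paper, which proves the lemma by combining exactly the two ingredients you use: the one-step identity of \cite[Lemma 2.10]{GGR}, which you telescope, and \cite[Corollary 3.3]{GGR}, which supplies the vanishing of the remainder $D_{\mu,j+1}(L^Nf)$ for general $\mu$ (the one step you rightly flag as the genuine subtlety). Your verification in the model case $\mu=\sigma$ is a helpful sanity check but does not change the substance of the argument.
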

Now we start with the following proposition which describes a relationship between $L^j(\sigma^{\alpha}_n[f])$ and $\sigma^{\alpha}_{n-j}[L^jf]$ for any function $f\in\mathcal O(\mathbb D).$
\begin{prop}\label{L and Gsigma relation}
For every  $f\in\mathcal O(\mathbb D),$  $n\in \mathbb Z_{\geqslant 0},$ $\alpha \geqslant 0$ and $j\in\mathbb N$ we have 
\begin{align*}
L^j(\sigma^{\alpha}_n[f]) &=
\begin{cases}
\frac{\Gamma(n+1)\Gamma(n+\alpha-j+1)}{\Gamma(n-j+1)\Gamma(n+\alpha+1)}\,\sigma^{\alpha}_{n-j}[L^jf], & j\leqslant n,\\
0, &  j > n.
\end{cases}
\end{align*}
\end{prop}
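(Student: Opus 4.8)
The plan is to prove the identity by a direct comparison of Taylor coefficients, since both $L^j(\sigma^{\alpha}_n[f])$ and $\sigma^{\alpha}_{n-j}[L^jf]$ are polynomials whose coefficients are explicitly computable. First I would record the action of the backward shift on a power series: if $f(z)=\sum_{k=0}^{\infty}a_kz^k$, then $L^jf(z)=\sum_{k=0}^{\infty}a_{k+j}z^k$, so the $k$-th Taylor coefficient of $L^jf$ is $a_{k+j}$.

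The case $j>n$ is immediate: $\sigma^{\alpha}_n[f]$ is a polynomial of degree at most $n$, and each application of $L$ lowers the degree by one while deleting the constant term, so $L^j$ annihilates it once $j>n$, which gives $0$ as claimed.

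For $j\leqslant n$ I would write both sides out coefficientwise. Setting $c^{(n)}_k=\binom{n+\alpha}{\alpha}^{-1}\binom{n-k+\alpha}{\alpha}$, we have $\sigma^{\alpha}_n[f](z)=\sum_{k=0}^{n}c^{(n)}_k a_k z^k$; applying $L^j$ and reindexing $k\mapsto k+j$ shows that the $k$-th coefficient of $L^j(\sigma^{\alpha}_n[f])$ equals $c^{(n)}_{k+j}a_{k+j}=\binom{n+\alpha}{\alpha}^{-1}\binom{n-k-j+\alpha}{\alpha}a_{k+j}$ for $0\leqslant k\leqslant n-j$. On the other side, applying the definition of the generalized Ces\`aro mean to $L^jf$ (whose $k$-th coefficient is $a_{k+j}$), the $k$-th coefficient of $\sigma^{\alpha}_{n-j}[L^jf]$ is $\binom{n-j+\alpha}{\alpha}^{-1}\binom{n-j-k+\alpha}{\alpha}a_{k+j}$.

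The key observation, and the only thing that makes the identity clean, is that the factor $\binom{n-k-j+\alpha}{\alpha}$ occurs in both expressions with the same argument and hence cancels, leaving a scalar ratio $\binom{n-j+\alpha}{\alpha}/\binom{n+\alpha}{\alpha}$ that is independent of $k$. Rewriting this ratio through $\binom{m+\alpha}{\alpha}=\Gamma(m+\alpha+1)/(\Gamma(\alpha+1)\Gamma(m+1))$ collapses it to $\frac{\Gamma(n+1)\Gamma(n+\alpha-j+1)}{\Gamma(n-j+1)\Gamma(n+\alpha+1)}$, which is exactly the constant in the statement. Thus every coefficient of $L^j(\sigma^{\alpha}_n[f])$ is this fixed constant times the corresponding coefficient of $\sigma^{\alpha}_{n-j}[L^jf]$, and the two polynomials coincide. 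There is no genuine obstacle here; the proof is pure bookkeeping, and the only point demanding care is performing the index shift $k\mapsto k+j$ correctly and noticing that the $k$-dependent binomial factor is common to both sides, so that the remaining prefactor is the claimed $n,j,\alpha$-dependent scalar.
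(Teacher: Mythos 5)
Your proof is correct, and it is structured differently from the paper's. The paper first verifies the case $j=1$ by applying $L$ to the coefficient expression for $\sigma_n^{\alpha}[f]$, and then runs an induction on $j$, composing the single-step identity $L(\sigma^{\alpha}_{m+1}[f])=\frac{\Gamma(m+2)\Gamma(m+\alpha+1)}{\Gamma(m+\alpha+2)\Gamma(m+1)}\,\sigma^{\alpha}_m[Lf]$ with the induction hypothesis applied to $Lf$, so the final constant emerges as a telescoping product of one-step ratios. You instead compute both sides in closed form for general $j$ at once: after the index shift $k\mapsto k+j$, the $k$-th coefficient of $L^j(\sigma^{\alpha}_n[f])$ is $\binom{n+\alpha}{\alpha}^{-1}\binom{n-j-k+\alpha}{\alpha}a_{k+j}$, while that of $\sigma^{\alpha}_{n-j}[L^jf]$ is $\binom{n-j+\alpha}{\alpha}^{-1}\binom{n-j-k+\alpha}{\alpha}a_{k+j}$, and the common $k$-dependent binomial factor cancels, leaving the $k$-independent ratio $\binom{n-j+\alpha}{\alpha}\big/\binom{n+\alpha}{\alpha}=\frac{\Gamma(n+1)\Gamma(n+\alpha-j+1)}{\Gamma(n-j+1)\Gamma(n+\alpha+1)}$, exactly the stated constant; your treatment of $j>n$ (degree drop under $L$) also matches the paper's. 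Your direct route buys brevity and makes transparent \emph{why} the identity holds with a scalar prefactor — the $k$-dependence lives entirely in a factor shared by both sides — whereas the paper's induction only ever requires checking the one-step shift, at the cost of carrying the constant through the induction. Both arguments are pure bookkeeping and equally rigorous; yours is arguably the more economical write-up.
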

\begin{proof}
Since for any $f\in \mathcal O(\mathbb D)$ and $n\in\mathbb Z_{\geqslant 0},$ $\sigma^{\alpha}_n[f]$ is a polynomial of degree at most $n$, it follows that $L^j(\sigma^{\alpha}_n[f])=0$ whenever $j>n.$ 
For the remaining case, let $f\in \mathcal O(\mathbb D)$. Note that
\begin{align*}
\sigma_n^{\alpha}[f](z)= \frac{\Gamma(n+1)}{\Gamma(n+\alpha+1)} \sum\limits_{k=0}^n \frac{\Gamma(n-k+\alpha+1)}{\Gamma(n-k+1)} a_k z^k,\;\;\;\;\;\;~~~n\in \mathbb Z_{\geqslant 0},\;\alpha \geqslant 0.
\end{align*}
Operating $L$ on both sides, we get
\begin{align*}
 L(\sigma_n^{\alpha}[f](z))= \frac{\Gamma(n+1)}{\Gamma(n+\alpha+1)}\sum\limits_{k=0}^{n-1} \frac{\Gamma(n-k+\alpha)}{\Gamma(n-k)} a_{k+1} z^k,\;\;\;\;\;\;~~~n\in \mathbb N,\;\alpha \geqslant 0.
\end{align*}
Hence it follows that
\begin{align*}
L(\sigma^{\alpha}_n[f]) &=
\begin{cases}
\frac{\Gamma(n+1)\Gamma(n+\alpha)}{\Gamma(n+\alpha+1)\Gamma(n)}\sigma^{\alpha}_{n-1}[Lf], & n\geqslant 1,\\
0, &   n=0.
\end{cases}
\end{align*}
This completes the proof of the proposition for the case $j=1$. 
Fix $k\in\mathbb N$ with $k<n.$ Assume that the statement of the proposition holds for $j= k.$ 
Then applying the induction hypothesis for the function $Lf$ we have that for any $m\in \mathbb Z_{\geqslant 0}$ satisfying $m\geqslant k,$
\begin{align*}
L^k (\sigma^{\alpha}_m[Lf])= \frac{\Gamma(m+1)\Gamma(m+\alpha-k+1)}{\Gamma(m-k+1)\Gamma(m+\alpha+1)}\,\sigma^{\alpha}_{m-k}[L^{k+1}f].
\end{align*}
Since $L(\sigma^{\alpha}_{m+1}[f])=\frac{\Gamma(m+2)\Gamma(m+\alpha+1)}{\Gamma(m+\alpha+2)\Gamma(m+1)}\, \sigma^{\alpha}_m[Lf]$ for every $m\in \mathbb Z_{\geqslant 0},$ we obtain that 
\begin{align*}
L^{k+1}(\sigma^{\alpha}_{m+1}[f])=  \frac{\Gamma(m+2)\Gamma(m+\alpha-k+1)}{\Gamma(m+\alpha+2)\Gamma(m-k+1)}\,\sigma^{\alpha}_{m-k}[L^{k+1}f],\,\,m\geqslant k. 
\end{align*}
This gives us that for any $n\in \mathbb Z_{\geqslant 0},$
\begin{align*}
L^{k+1}(\sigma^{\alpha}_n[f]) &=
\begin{cases}
\frac{\Gamma(n+1)\Gamma(n+\alpha-k)}{\Gamma(n+\alpha+1)\Gamma(n-k)}\,\sigma^{\alpha}_{n-k-1}[L^{k+1}f], & k+1\leqslant n,\\
0, &  k+1 > n.
\end{cases}
\end{align*}
This finishes the induction step for $j=k+1$ and completes the proof of the proposition.
\end{proof}

\textbf{Alternative proof of Theorem \ref{mainthm1}:} Let $\mu \in \mathcal M_{+}(\mathbb T),$ $n\in\mathbb N,$ and $\alpha > \frac{1}{2}.$
We will prove this theorem by induction on $m.$ From \cite[Theorem 1.1]{CesaroM}, it follows that there exists a constant $M_{\alpha},$ independent of $n,$ such that $D_{\mu,1} (\sigma^{\alpha}_n[f]) \leqslant M_{\alpha} D_{\mu,1} (f)$ for every $f\in\mathcal H_{\mu,1}.$ Let us assume that $D_{\mu,m} (\sigma^{\alpha}_n[f]) \leqslant M_\alpha D_{\mu,m} (f)$ for every $f\in \mathcal H_{\mu,m},$ and for $m=1,\ldots,k.$ Now take $f\in \mathcal H_{\mu,k+1}.$ Since $\sigma^{\alpha}_n[f]$ is a polynomial of degree at most $n$, from Lemma \ref{backward shift identity}, it follows that
\begin{align*}
D_{\mu,k+1} (\sigma^{\alpha}_n[f]) = \sum\limits_{j=1}^n D_{\mu,k} (L^j (\sigma^{\alpha}_n[f]).
\end{align*}
Now applying the Proposition \ref{L and Gsigma relation} and the induction hypothesis we obtain that 
\begin{align*}
D_{\mu,k+1} (\sigma^{\alpha}_n[f])  &= \sum\limits_{j=1}^n \left( \frac{\Gamma(n+1)\Gamma(n+\alpha-j+1)}{\Gamma(n-j+1)\Gamma(n+\alpha+1)}\right)^2 D_{\mu,k} (\sigma^{\alpha}_{n-j}[L^jf])\\
&\leqslant \sum\limits_{j=1}^n D_{\mu,k} (\sigma^{\alpha}_{n-j}[L^jf])\\
& \leqslant M_{\alpha}\sum\limits_{j=1}^n D_{\mu,k} (L^jf)\\
& \leqslant M_{\alpha} \sum\limits_{j=1}^{\infty} D_{\mu,k} (L^jf)\\
&= M_{\alpha}D_{\mu,k+1} (f).
\end{align*} 
This completes the induction step for $m=k+1$ and as well as the proof of the theorem.


\part*{\textbf{Declarations}}
\noindent Conflict of interest : The authors declare that there are no conflict of interest regarding the publication of this paper.


\end{document}